\documentclass[11pt,a4paper]{article}

\usepackage[a4paper,margin=30mm]{geometry}
\usepackage{microtype}
\usepackage{amsmath,amssymb,amsfonts,amsthm,mathtools}
\usepackage{mathrsfs}
\usepackage{bm}
\usepackage{xcolor}
\usepackage{comment}
\usepackage{graphicx}
\usepackage{booktabs}
\usepackage[numbers,sort&compress]{natbib}
\usepackage[hidelinks]{hyperref}
\usepackage{enumitem}
\usepackage{units}
\usepackage{subcaption}
\usepackage{epsfig}

\setlist{nosep}
\DeclareMathOperator{\Var}{Var}

\theoremstyle{plain}
\newtheorem{theorem}{Theorem}[section]
\newtheorem{lemma}[theorem]{Lemma}

\newtheorem{proposition}[theorem]{Proposition}
\newtheorem{definition}[theorem]{Definition}
\newtheorem{remark}[theorem]{Remark}

\colorlet{revisioncolor}{black}

\newcommand{\safeincludegraphics}[2][]{%
  \IfFileExists{#2}{\includegraphics[#1]{#2}}{%
    \fbox{\parbox[c][0.24\textheight][c]{0.82\linewidth}{\centering
    Figure file not provided:\\[2mm]\texttt{\detokenize{#2}}}}%
  }%
}

\title{\bfseries Optimal history encoding for elastic-plastic hereditary laws: Sharp input and constitutive approximation}

\author{
  M.~Ortiz${}^\dagger$ and P.~Pedregal${}^\ddagger$ \\
  \small ${}^\dagger$California Institute of Technology, Engineering and Applied Science Division,
  \\ \small Pasadena, CA 91125, USA
  \\
  \small ${}^\dagger$Centre Internacional de M\`etodes Num\`erics a l'Enginyeria (CIMNE),
  \\ \small Universitat Polit\`ecnica de Catalunya, Jordi Girona 1, 08034 Barcelona, Spain
  \\
  \small ${}^\ddagger$Escuela T\'ecnica Superior de Ingenieros Industriales (ETSII) y Departamento de Matemáticas,
  \\ \small Universidad de Castilla-La Mancha, Av. Camilo José Cela, s/n, 13071 Ciudad Real, Spain.
}

\date{}

\begin{document}
\maketitle

\begin{abstract}
We formulate rate-independent elastic-ideally-plastic response directly in hereditary form and study its approximation by finite history surrogates. At the material-point level, the constitutive law is the vector play operator generated by metric projection onto a closed convex elastic domain in stress space. Starting from the closest-point return mapping for step inputs, we pass to absolutely continuous driving histories, for which the constitutive law admits a differential form: the stress remains in the elastic domain and the difference of rates belongs almost everywhere to the normal cone. In this $W^{1,1}$ setting, the hereditary law is causal, contracts variation, and satisfies a $BV$-to-$L^\infty$ stability estimate. We then approximate histories by right-continuous step surrogates with at most $N$ constant pieces. For absolutely continuous inputs, we prove a sharp minimax theorem for input approximation in $L^\infty$, normalized by the $BV$ norm: the optimal encoder is given by equal-variation sampling. For constitutive approximation, the correct vector-valued minimax statement is obtained by allowing the encoder to be material-law aware: it may compress the exact stress history $\mathcal P(\pi)$ rather than only the driving history $\pi$. The resulting stress-aware encoder, followed by the same discrete hereditary decoder, gives the sharp value $(2N)^{-1}$ under the natural nondegeneracy assumption $0\in\operatorname{int}C$. 
The scalar complementary-variable case is also recorded: there the input equal-variation encoder is sharp because the scalar stop/play operator is $L^\infty$-nonexpansive in the complementary variable. The results identify cumulative variation as the natural variable for sampling and compressing both driving and constitutive histories.
\end{abstract}

\bigskip
\noindent\textbf{Keywords:} materials with memory; elastoplasticity; sweeping process; play operator; optimal recovery; Data-Driven mechanics

\medskip
\noindent\textbf{MSC (2020):} 74C05, 74C15, 49J53, 47H09, 41A46

\section{Introduction}

The theory of {\sl materials with memory} furnishes the most general representation of inelastic materials, whose response is irreversible and history dependent. According to Rivlin \cite{Rivlin:1972}: ``The characteristic property of inelastic solids which distinguishes them from elastic solids is the fact that the stress measured at time $t$ depends not only on the instantaneous value of the deformation but also on the entire history of deformation." The origins of the theory may be traced to a series of papers by Green and Rivlin starting in 1957 \cite{Green:1957, Green:1959, Green:1960}, who proposed the use of hereditary constitutive laws, originally developed by Boltzmann \cite{Boltzmann:1874} and Volterra \cite{Volterra:1909} in the linear case, for the description of non-linear viscoelastic materials \cite{Rivlin:1955}. The hereditary functional approach to inelasticity was introduced into thermodynamics by Coleman \cite{Coleman:1964a}. The foundations underlying the memory-functional and the internal-variable formalisms were critically reviewed by Kestin and Rice \cite{Kestin:1970}. The correspondence and, in some cases, equivalence between the material-with-memory, internal variable and differential formulations of inelasticity have also been extensively investigated \cite{Coleman:1967, Valanis:1967, Lubliner:1969, Lubliner:1973}.

In the history view of inelasticity, the material response is characterized by a \emph{history law}, i.~e, an \emph{operator} that maps histories of strain to histories of stress. The mapping may be linear, as in linear viscoelasticity, or nonlinear, as in plasticity, and must fulfill physical principles such as \emph{causality}, or dependence on past history only, and the \emph{second law} of thermodynamics. In this framework, a fundamental question concerns the representation and approximation of inelastic materials, by \emph{finite-rank} hereditary operators, in a sense to be made precise.  

For linear operators, such as arise, e.~g., in linear viscoelasticity, this problem falls squarely within the theory of {\sl $N$-widths} \cite{Pinkus:1985, ortiz2025linear} and was solved by Schmidt as early as 1907 \cite{Schmidt:1907}. The appeal of the theory of $N$-widths is that it identifies subspaces of histories of given dimension resulting in the best possible approximation of a class of hereditary laws. Furthermore, the approximation of linear hereditary laws by finite-rank operators is equivalent to the formulation of viscoelastic models in terms of a finite number of {\sl history} or {\sl internal variables}. The theory of $N$-widths thus also answers the question of what is the best choice of history or internal variables for purposes of representing a given class of linear viscoelastic materials \cite{Liu:2023, Bhattacharya:2023, ortiz2025linear}. 

The nonlinear setting is more subtle. Here, one no longer seeks a finite-dimensional linear subspace of histories, but rather a nonlinear representation of the constitutive map under explicit information constraints. This goal invites an \emph{encoder/decoder} interpretation: the encoder compresses the driving history into a finite surrogate, and the decoder reconstructs the stress history from that surrogate \cite{NovakWozniakowski2008, NovakWozniakowski2010, NovakWozniakowski2012}. Encoder/decoder schemes originate in \emph{information-based complexity} theory \cite{MicchelliRivlin1977, TraubWozniakowski1980, TraubWasilkowskiWozniakowski1988}, and have been extensively used in a number of areas of application, including vector quantization, adaptive approximation, piecewise polynomial and tree approximation and neural network representations, among others \cite{ TraubWasilkowskiWozniakowski1988, DeVoreLorentz1993, DeVore1998, BalanCasazzaEdidin2000, CotterDashtiRobinsonStuart2010, DeVoreHowardMicchelli2011, SchwabStuart2012, SeidmanKissasPappasPerdikaris2023}. In mechanics, encoder/decoder representations provide a unified abstraction for reduced-order modeling and modern representation learning \cite{TraubWozniakowski1980, BhattacharyaHosseiniKovachkiStuart2021}.

The present work is specifically concerned with rate-independent elastic-ideally-plastic materials. The first objective is to reformulate the classical incremental constitutive law \cite{Moreau1977, OrtizHerrera1981, MonteiroMarques1993, KunzeMonteiroMarques2000} directly in hereditary form on absolutely continuous driving histories, starting from the closest-point projection time-discretization \cite{OrtizHerrera1981, OrtizPinskyTaylor1983, OrtizMartin1989}. At the material-point level, this leads to the vector play operator generated by the elastic domain and to a clean differential formulation in which the stress remains in the elastic domain and the difference of rates belongs almost everywhere to the corresponding normal cone. In this $W^{1,1}$ setting, the hereditary law is causal, contracts total variation, and satisfies a $BV$-to-$L^\infty$ stability estimate.

The second objective is to determine optimal finite encodings by right-continuous step histories with at most $N$ constant pieces. There are two distinct, but complementary, questions. The first is the purely input-side problem: how well can the driving history $\pi=\mathbb C\epsilon$ itself be approximated in $L^\infty$ under a $BV$ normalization? We prove a sharp minimax theorem on absolutely continuous histories, and the optimal rule is equal-variation sampling. The second is the constitutive problem: how well can the stress history $\mathcal P(\pi)$ be represented after decoding through the same hereditary law? Here the encoder/decoder viewpoint is essential. If the encoder is constrained to be an input approximant and the decoder is controlled only by its vector $BV$-to-$L^\infty$ stability, the available estimates are suboptimal. If, however, the encoder is allowed to be material-law aware, as permitted by the minimax formulation of the constitutive error, then it can compress the stress history itself. Since step histories taking values in the elastic domain are fixed points of the decoder, equal-variation sampling of $\mathcal P(\pi)$ yields the sharp vector constitutive value $(2N)^{-1}$, under the natural assumption $0\in\operatorname{int}C$. This distinction keeps the encoder/decoder interpretation intact while separating an optimal constitutive code from a merely input-first code.

The paper is organized as follows. Section~\ref{sec:hereditary} formulates the elastic-plastic history operator on step inputs, extends it to absolutely continuous histories in differential form, and establishes its basic well-posedness and stability properties. Section~\ref{sec:optimal} introduces the admissible class of $N$-piece surrogates, proves the sharp minimax theorem for input approximation and proves the sharp vector constitutive minimax theorem for stress-aware encoders. The scalar complementary-variable result is then recorded as the case in which input-side $L^\infty$ control does propagate through the decoder. Section~\ref{sec:numerical} provides proportional and nonproportional Mises illustrations that distinguish input-first encodings from constitutive stress encodings. Section~\ref{sec:conclusions} summarizes the results and discusses their implications for Data-Driven sampling of stress/strain histories and for global solution strategies based on equivalent body forces.

\section{Elastic-plastic hereditary laws}
\label{sec:hereditary}

An axiomatic foundation for rate-independent elastoplasticity can be built on the principle of maximum dissipation \cite{Lubliner1989, Lubliner1990, Lubliner2008}. Plasticity models are conventionally formulated in rate form. \emph{Ideally-plastic} \emph{rate-independent} \emph{elastic-plastic} solids are then fully described by \emph{Hooke's law} and an \emph{elastic domain} in stress space, which can be unbounded (e.~g., isochoric plasticity) and non-smooth (e.~g., crystal plasticity), and is commonly observed to be \emph{convex} (see \cite{OrtizPopov1983} for a compilation of early experimental work). For such materials, the principle of maximum dissipation finds mathematical expression in the concept of \emph{subdifferential}, introduced by Moreau in his pioneering work \cite{Moreau1963} and subsequently applied to the study of plastic and viscoplastic materials (see \cite{Moreau1970, Moreau1971a, Moreau1974, Moreau1976, OrtizHerrera1981, OrtizPinskyTaylor1983} for some early references that connect plasticity and convex analysis). Moreau's formalism also makes contact with the theory of semigroups generated by subdifferential operators and time-discretizations thereof \cite{Brezis1973, OrtizHerrera1981}. For present purposes, it is more convenient to formulate plasticity directly in \emph{hereditary form}, as shown next.

The aim of this section is twofold. First, we state the elastic-plastic law in a form that is entirely hereditary and therefore independent of any rate decomposition. Second, we establish the stability properties of that law on absolutely continuous histories, to be used subsequently for purposes of optimal approximation.

\subsection{Setting}

Let $E=\mathbb{R}^n$ be a finite-dimensional \emph{strain space} and let $F=E^\ast$ be its dual \emph{stress space}. We metrize $E$ by a symmetric positive-definite elastic modulus $\mathbb{C}\in L(E,F)$ and $F$ by its inverse, to wit:
\begin{align}
    \langle \epsilon^1,\epsilon^2\rangle_E &:= \mathbb{C}\epsilon^1\cdot \epsilon^2,
    &
    \|\epsilon\|_E &:= \sqrt{\mathbb{C}\epsilon\cdot\epsilon},
    \\
    \langle \sigma^1,\sigma^2\rangle_F &:= \mathbb{C}^{-1}\sigma^1\cdot \sigma^2,
    &
    \|\sigma\|_F &:= \sqrt{\mathbb{C}^{-1}\sigma\cdot\sigma}.
\end{align}
The duality pairing between stress and strain is
\begin{equation}
  \langle \sigma,\epsilon\rangle := \sigma\cdot\epsilon .
\end{equation}
For definiteness, one may set $E=F=\mathbb{R}^{3\times 3}_{\mathrm{sym}}$, corresponding to three-dimensional linearized-kinematics plasticity, but the analysis that follows is independent of dimension. Throughout the paper, we work with the \emph{driving} and \emph{complementary variable} histories
\begin{equation} \label{eq:driver_and_comp}
    \pi(t):=\mathbb{C}\epsilon(t) \in F,
    \quad
    w(t):=\pi(t)-\sigma(t) := \mathbb{C} \epsilon^p(t) \in F,
\end{equation}
respectively, rather than directly with the history of strain $\epsilon(t)$ or plastic strain $\epsilon^p(t)$. This choice makes the constitutive law naturally stress-valued. The inverse relation is $\epsilon=\mathbb C^{-1}\pi$, and $w$ is the stress-valued counterpart of the plastic strain.

Let $C\subset F$ be a non-empty, closed, convex \emph{elastic domain}, not necessarily bounded and not necessarily smooth. Its metric projection,
\begin{equation}
    P_C(\sigma):=\arg\min_{\eta\in C}\|\sigma-\eta\|_F, \quad \sigma\in F,
\end{equation}
is well defined, $1$-Lipschitz and non-expansive \cite{Rockafellar:1970, BauschkeCombettes2017},
\begin{equation} \label{eq:proj-lip}
    \|P_C(\sigma^1)-P_C(\sigma^2)\|_F\le \|\sigma^1-\sigma^2\|_F,
    \quad \forall \sigma^1,\sigma^2\in F.
\end{equation}
Moreover,
\begin{equation} \label{eq:proj-normal}
    \sigma-P_C(\sigma)\in N_C(P_C(\sigma)),
\end{equation}
where $N_C(\sigma)$ denotes the \emph{normal cone} to $C$ at $\sigma$. In addition, if $\nu^i \in N_C(\sigma^i)$, $i=1,2$, then
\begin{equation} \label{eq:normal-monotonicity}
    \langle \nu^1-\nu^2,\sigma^1-\sigma^2\rangle_F\ge 0 ,
\end{equation}
which constitutes a monotonicity property. 

Let $H$ be a real Hilbert space. We denote by $BV([0,T];H)$ the space of $H$-valued functions of bounded variation on $[0,T]$, with
\begin{equation}
    \Var(u;[0,T])
    :=
    \sup_{\Pi}\sum_{j=1}^{m}\|u(t_j)-u(t_{j-1})\|_H,
\end{equation}
the supremum being taken over all partitions $\Pi=\{0=t_0<t_1<\cdots<t_m=T\}$ \cite{AmbrosioFuscoPallara2000}. 

An elementary property of the variation is domain-additivity.

\begin{lemma}[Additivity of the variation on subintervals] \label{lem:variation-additivity}
Let $u\in BV([0,T];H)$ and let $0\le a\le b\le c\le T$. Then
\begin{equation}
\Var(u;[a,c])=\Var(u;[a,b])+\Var(u;[b,c]).
\end{equation}
\end{lemma}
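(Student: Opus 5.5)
We prove the two inequalities separately, working directly from the definition of $\Var$ as a supremum over partitions. The degenerate cases $a=b$ or $b=c$ are trivial, since the variation over a single point is zero. So we assume $a<b<c$. All quantities are finite because $u\in BV([0,T];H)$ and a partition of a subinterval extends to a partition of $[0,T]$ with no larger sum.

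\emph{Step 1 ($\ge$).} Let $\Pi_1$ be any partition of $[a,b]$ and $\Pi_2$ any partition of $[b,c]$. Their union $\Pi_1\cup\Pi_2$ is a partition of $[a,c]$, and its sum equals the sum over $\Pi_1$ plus the sum over $\Pi_2$, because $b$ is a common node. Hence
\begin{equation*}
\sum_{\Pi_1}+\sum_{\Pi_2}\le \Var(u;[a,c]).
\end{equation*}
Taking suprema over $\Pi_1$ and $\Pi_2$ independently gives $\Var(u;[a,b])+\Var(u;[b,c])\le\Var(u;[a,c])$.

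\emph{Step 2 ($\le$).} Let $\Pi=\{a=t_0<\cdots<t_m=c\}$ be a partition of $[a,c]$, and let $\Pi'=\Pi\cup\{b\}$ be its refinement.

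\begin{itemize}
\item If $b$ is already a node, then $\Pi'=\Pi$ and the sum is unchanged.
\item Otherwise $t_{k-1}<b<t_k$ for some $k$. The triangle inequality in $H$ gives
\begin{equation*}
\|u(t_k)-u(t_{k-1})\|_H\le \|u(t_k)-u(b)\|_H+\|u(b)-u(t_{k-1})\|_H ,
\end{equation*}
so the sum over $\Pi$ is at most the sum over $\Pi'$.
\end{itemize}

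Now $\Pi'$ splits into a partition of $[a,b]$ and a partition of $[b,c]$. Therefore its sum is bounded by $\Var(u;[a,b])+\Var(u;[b,c])$. Taking the supremum over $\Pi$ yields the reverse inequality.

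There is no real obstacle. The only point needing care is the refinement step, namely that inserting $b$ never decreases the sum; this is exactly the triangle inequality.
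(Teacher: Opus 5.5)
Your proof is correct and follows the paper's argument essentially verbatim: the union of partitions of $[a,b]$ and $[b,c]$ gives the lower bound, and inserting $b$ into an arbitrary partition of $[a,c]$ gives the upper bound. You also spell out, via the triangle inequality, why inserting $b$ cannot decrease the partition sum, a step the paper leaves implicit.
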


\begin{proof}
For any partitions
\begin{equation}
\Pi_1=\{a=s_0<\cdots<s_m=b\},\quad
\Pi_2=\{b=r_0<\cdots<r_n=c\},
\end{equation}
their union is a partition of $[a,c]$, hence
\begin{equation}
\sum_{i=1}^m \|u(s_i)-u(s_{i-1})\|_H
+\sum_{j=1}^n \|u(r_j)-u(r_{j-1})\|_H
\le \Var(u;[a,c]).
\end{equation}
Taking the supremum over $\Pi_1$ and $\Pi_2$ gives
\begin{equation}
\Var(u;[a,b])+\Var(u;[b,c])\le \Var(u;[a,c]).
\end{equation}
Conversely, let
\begin{equation}
\Pi=\{a=t_0<\cdots<t_\ell=c\}
\end{equation}
be any partition of $[a,c]$. By adding the point $b$ to $\Pi$ if necessary, we obtain a refinement
that splits into a partition of $[a,b]$ and a partition of $[b,c]$. Therefore,
\begin{equation}
\sum_{k=1}^\ell \|u(t_k)-u(t_{k-1})\|_H
\le \Var(u;[a,b])+\Var(u;[b,c]).
\end{equation}
Taking the supremum over all partitions $\Pi$ yields
\begin{equation}
\Var(u;[a,c])\le \Var(u;[a,b])+\Var(u;[b,c]).
\end{equation}
Combining the two inequalities proves the claim.
\end{proof}

The following lemma addresses a case that will arise often. 

\begin{lemma}[Variation of a step function] \label{lem:variation-step-function}
Let $H$ be a real Hilbert space, let
\begin{equation}
\Pi=\{0=t_0<t_1<\cdots<t_N=T\},
\end{equation}
and let $u:[0,T]\to H$ be the right-continuous step function defined by
\begin{equation}
u(t)=u_{k-1}\quad \text{for } t\in [t_{k-1},t_k),\quad k=1,\dots,N,
\end{equation}
and $u(T)=u_N$, where $u_0,\dots,u_N\in H$. Then
\begin{equation}
\Var(u;[0,T])=\sum_{k=1}^N \|u_k-u_{k-1}\|_H.
\end{equation}
\end{lemma}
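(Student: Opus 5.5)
We prove the identity by two inequalities, using Lemma~\ref{lem:variation-additivity} to reduce the problem to one cell of the partition at a time. By additivity applied repeatedly along the nodes of $\Pi$,
\begin{equation*}
\Var(u;[0,T])=\sum_{k=1}^N \Var(u;[t_{k-1},t_k]),
\end{equation*}
so it suffices to show that $\Var(u;[t_{k-1},t_k])=\|u_k-u_{k-1}\|_H$ for each $k$.

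Fix $k$ and note that on the closed interval $[t_{k-1},t_k]$ the function $u$ equals $u_{k-1}$ on $[t_{k-1},t_k)$ and takes the value $u(t_k)=u_k$ at the right endpoint. This holds for $k<N$ by the step definition on the next cell, and for $k=N$ by the convention $u(T)=u_N$.

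For the lower bound, take the trivial partition $\{t_{k-1},t_k\}$. Its sum is $\|u(t_k)-u(t_{k-1})\|_H=\|u_k-u_{k-1}\|_H$.

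For the upper bound, take an arbitrary partition $t_{k-1}=s_0<\cdots<s_m=t_k$. Every $s_i$ with $i<m$ lies in $[t_{k-1},t_k)$, so $u(s_i)=u_{k-1}$ for those indices. Hence all increments vanish except the last one, $\|u(s_m)-u(s_{m-1})\|_H=\|u_k-u_{k-1}\|_H$. Taking the supremum over partitions gives equality on the cell.

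Summing over $k$ proves the lemma. The only delicate point is the bookkeeping at the nodes: right-continuity puts the jump at $t_k$ into the cell ending at $t_k$, and the convention $u(T)=u_N$ handles the last cell. Each jump is therefore counted exactly once and lost nowhere, and no genuine obstacle remains.
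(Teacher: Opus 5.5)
Your proof is correct, but it is organized differently from the paper's. The paper works globally on $[0,T]$. It gets the lower bound from the partition $\Pi$ itself. For the upper bound it takes an arbitrary partition $\widetilde\Pi$ of $[0,T]$, notes that only increments crossing jump points contribute, and uses the triangle inequality to bound an increment that crosses several jumps by the sum of those jump sizes.

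You instead localize with Lemma~\ref{lem:variation-additivity}, splitting $[0,T]$ at the nodes of $\Pi$, and then compute the variation on each closed cell exactly. Every partition point except the last lies in $[t_{k-1},t_k)$, so every partition of the cell gives the same sum $\|u_k-u_{k-1}\|_H$, and no estimate is needed at the cell level. The triangle inequality has not disappeared: it is hidden in the refinement step of the additivity lemma, where inserting the node $b$ does not decrease the partition sum.

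Your route buys explicit bookkeeping at the nodes. Each jump is visibly assigned to exactly one cell, the last one via $u(T)=u_N$, and you reuse a lemma already proved. The paper's route is self-contained, at the cost of a somewhat informal description of ``crossings'' of jump points.

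One small technicality: Lemma~\ref{lem:variation-additivity} is stated for $u\in BV([0,T];H)$, and finiteness of $\Var(u;[0,T])$ is part of what you are proving here. This is harmless, because the proof of the additivity lemma goes through verbatim with variations taking values in $[0,\infty]$. You should say so explicitly, to avoid an apparent circularity.
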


\begin{proof}
For the partition $\Pi$ itself,
\begin{equation}
\sum_{k=1}^N \|u(t_k)-u(t_{k-1})\|_H
=\sum_{k=1}^N \|u_k-u_{k-1}\|_H,
\end{equation}
hence
\begin{equation}
\Var(u;[0,T])\ge \sum_{k=1}^N \|u_k-u_{k-1}\|_H.
\end{equation}
Conversely, let $\widetilde\Pi=\{0=s_0<\cdots<s_m=T\}$ be any partition of $[0,T]$.
Since $u$ is constant on each interval $[t_{k-1},t_k)$, the only contributions to
\begin{equation}
\sum_{j=1}^m \|u(s_j)-u(s_{j-1})\|_H
\end{equation}
come from crossings of the jump points $t_1,\dots,t_{N-1}$ and possibly the endpoint $T$.
By the triangle inequality, the total contribution across the $k$-th jump is at most
$\|u_k-u_{k-1}\|_H$. Summing over all jumps gives
\begin{equation}
\sum_{j=1}^m \|u(s_j)-u(s_{j-1})\|_H
\le
\sum_{k=1}^N \|u_k-u_{k-1}\|_H.
\end{equation}
Taking the supremum over all partitions $\widetilde\Pi$ proves the claim.
\end{proof}

For purposes of analysis, we shall use the equivalent norms
\begin{equation}
    \|u\|_{BV,0}:=\|u(0)\|_H+\Var(u;[0,T]),
    \quad
    \|u\|_{BV,\infty}:=\|u\|_{L^\infty([0,T];H)}+\Var(u;[0,T]).
\end{equation}
We recall that every $BV$ function admits left and right limits everywhere and, henceforth, we identify it with its right-continuous representative \cite{AmbrosioFuscoPallara2000, Brezis2011, EvansGariepy2015}. For absolutely continuous histories we use the standard Sobolev space $W^{1,1}(0,T;H) \hookrightarrow BV([0,T];H)$, characterized by
\begin{equation}
    u(t)=u(0)+\int_0^t \dot u(s)\,ds,
    \quad
    \Var(u;[0,T])=\int_0^T \|\dot u(s)\|_H\,ds.
\end{equation}
In particular, every $W^{1,1}$ history has an absolutely continuous representative, and all the variation is carried by its density $\dot u$.

The following elementary approximation lemma will be used repeatedly.

\begin{lemma}[Uniform step approximation with controlled variation] \label{lem:step-approx}
For every $u\in BV([0,T];H)$ and every $N\in\mathbb{N}$ there exists a right-continuous step function $u_N$ with at most $N$ constant pieces such that
\begin{equation}
    \|u-u_N\|_{L^\infty([0,T];H)}\le \frac{\Var(u;[0,T])}{N},
    \quad
    \Var(u_N;[0,T])\le \Var(u;[0,T]).
\end{equation}
\end{lemma}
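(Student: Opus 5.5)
We partition $[0,T]$ according to the cumulative variation and sample $u$ at the left endpoint of each cell. Set $V:=\Var(u;[0,T])$. If $V=0$, then $u$ is constant, since $\|u(t)-u(0)\|_H\le\Var(u;[0,t])=0$, and $u_N:=u$ works. Otherwise we use the variation function $v(t):=\Var(u;[0,t])$. By Lemma~\ref{lem:variation-additivity}, $v$ is nondecreasing with $v(0)=0$ and $v(T)=V$. It also satisfies
\begin{equation}
\|u(t)-u(s)\|_H\le \Var(u;[s,t])=v(t)-v(s),\quad s\le t .
\end{equation}

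Define breakpoints
\begin{equation}
t_k:=\inf\{t\in[0,T]: v(t)\ge kV/N\},\quad k=1,\dots,N-1,
\end{equation}
with $t_0=0$ and $t_N=T$. These are nondecreasing; we discard repeated points, which only reduces the number of pieces. On each resulting interval $[t_{k-1},t_k)$ we set $u_N(t):=u(t_{k-1})$, and we set $u_N(T):=u(T)$, or equivalently we let the last value persist. Then $u_N$ is a right-continuous step function with at most $N$ pieces.

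The key estimate is that for $t\in[t_{k-1},t_k)$ we want $v(t)-v(t_{k-1})\le V/N$. Since $t<t_k$, the definition of the infimum gives $v(t)<kV/N$. One then needs $v(t_{k-1})\ge (k-1)V/N$, which requires right-continuity of $v$ at $t_{k-1}$. This is the main obstacle for general $BV$ functions. It holds for the right-continuous representative: the jump $v(t^+)-v(t)$ equals $\|u(t^+)-u(t)\|_H=0$, a standard fact I would verify by a short partition argument. Alternatively, I would restrict attention to the continuous $W^{1,1}$ case, where $v(t)=\int_0^t\|\dot u\|$ is continuous and the claim is immediate. Granting this, $\|u(t)-u_N(t)\|_H\le v(t)-v(t_{k-1})\le V/N$ on each piece. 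At $t=T$ the error is zero.

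Finally, the variation bound follows from Lemma~\ref{lem:variation-step-function}:
\begin{equation}
\Var(u_N;[0,T])=\sum_k\|u(t_k)-u(t_{k-1})\|_H\le \Var(u;[0,T]),
\end{equation}
since this sum is a partition sum for $u$ (including the final jump to $u(T)$ if one is kept). If instead the last piece is extended to $T$ without a final jump, the sum has fewer terms and the bound still holds.
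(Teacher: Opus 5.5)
Your proposal is correct and follows essentially the same route as the paper: the breakpoints are $t_k=\inf\{t:\ v(t)\ge kV/N\}$, $u$ is sampled at left endpoints $u_N=u(t_{k-1})$ on $[t_{k-1},t_k)$ with $u_N(T)=u(T)$, and the variation bound comes from Lemma~\ref{lem:variation-step-function} as a partition sum for $u$. You also make explicit a step the paper leaves implicit: $v(t_{k-1})\ge (k-1)V/N$ requires right-continuity of $v$, which holds because $u$ is identified with its right-continuous representative, so that $v(t^+)-v(t)=\|u(t^+)-u(t)\|_H=0$; two small caveats are that keeping the last value up to $T$ is not ``equivalent'' to setting $u_N(T)=u(T)$ (the error at $T$ is then at most $V/N$ rather than zero), and that the $W^{1,1}$ fallback alone would not prove the lemma as stated for all of $BV$.
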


\begin{proof}
Let $\alpha(t):=\Var(u;[0,t])$ and set $V:=\Var(u;[0,T])$. If $V=0$, the constant function $u_N\equiv u(0)$ does the job. Assume $V>0$. For $k=0,\dots,N-1$ let
\begin{equation}
    t_k:=\inf\{t\in[0,T]:\alpha(t)\ge kV/N\},
\end{equation}
with $t_0=0$ and $t_N=T$. Define a right-continuous step function $\widetilde u_N$ by
\begin{equation}
    \widetilde u_N(t):=u(t_{k-1}),
    \quad
    t\in [t_{k-1},t_k), \quad k=1,\dots,N,
\end{equation}
and $\widetilde u_N(T):=u(T)$. If some consecutive nodes coincide, the corresponding intervals are empty; removing them yields a right-continuous step function $u_N$ with at most $N$ non-empty pieces and the same values as $\widetilde u_N$. For every $t\in[t_{k-1},t_k)$,
\begin{equation}
    \|u(t)-u_N(t)\|_H
    \le \Var(u;[t_{k-1},t])
    \le \alpha(t_k^-)-\alpha(t_{k-1})
    \le \frac{V}{N},
\end{equation}
because $\alpha(t_k^-)\le kV/N$ by minimality of $t_k$. This proves the uniform estimate, while at $t=T$ the error is zero by construction. Finally,
\begin{equation}
    \Var(u_N;[0,T])
    =\sum_{k:\,t_k>t_{k-1}}\|u(t_k)-u(t_{k-1})\|_H
    \le \sum_{k=1}^{N}\Var(u;[t_{k-1},t_k])
    \le \Var(u;[0,T]) ,
\end{equation}
where the equality is specific to step functions, the first inequality follows from the definition of the total variation and the second inequality follows its interval additivity, Lemma~\ref{lem:variation-additivity}.
\end{proof}

\subsection{Discrete hereditary law on step inputs}

\begin{figure}[ht]
\begin{center}
	\begin{subfigure}{0.34\textwidth}\caption{} \safeincludegraphics[width=0.99\linewidth]{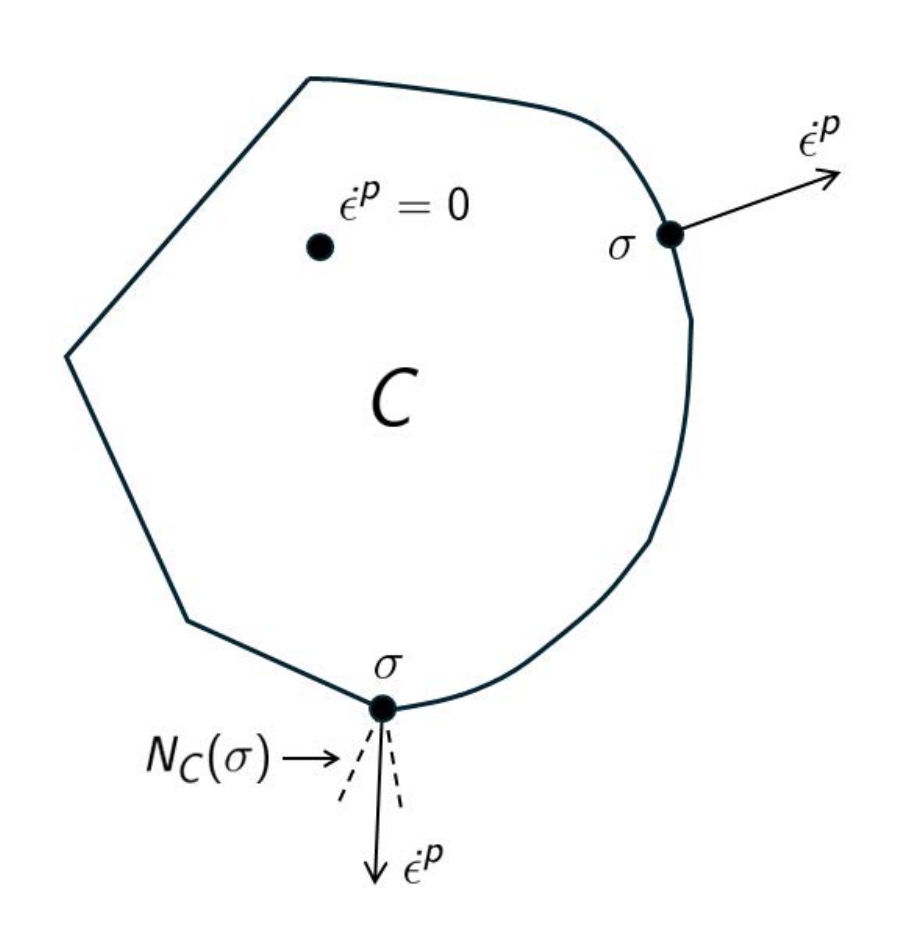}
	\end{subfigure}
	\begin{subfigure}{0.55\textwidth}\caption{} \safeincludegraphics[width=0.99\linewidth]{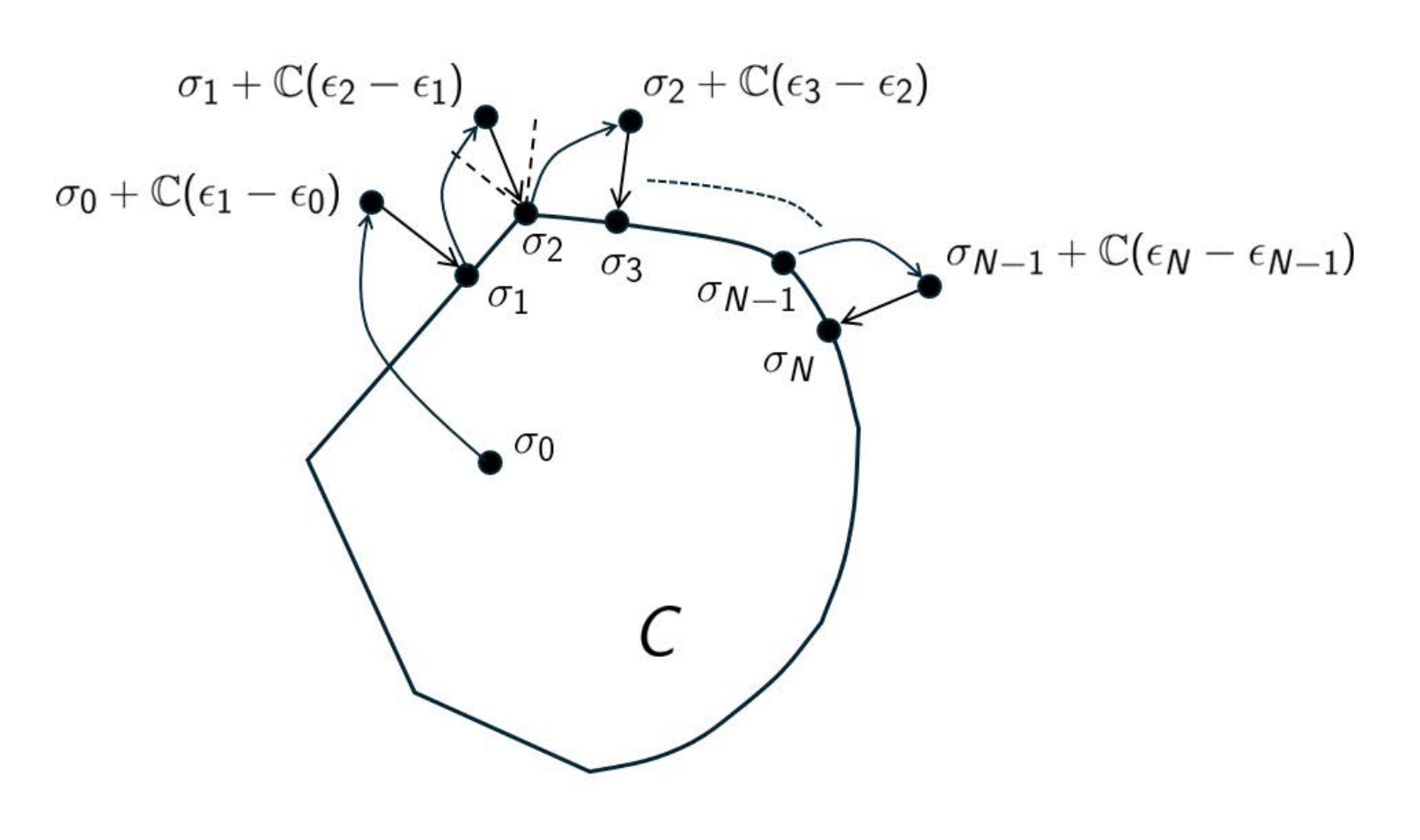}
	\end{subfigure}
    \caption{Schematic representation of the constitutive response of a rate-independent elastic-ideally plastic solid. a) Normality-based plastic flow rule $\dot{\epsilon}^p \in N_C(\sigma)$ \cite{Moreau1963}. Here $C$ is the elastic domain, $\sigma$ the current stress, $\dot{\epsilon}^p$ the instantaneous plastic-strain rate and $N_C(\sigma)$ is the normal cone at $\sigma$. b) Resolvent time discretization of the rate stress-strain relation. At every step, the stress is updated by first taking an elastic-predictor step and then projecting the result to the nearest point on the elastic domain \cite{OrtizHerrera1981}.}
    \label{fig:plasticity}
\end{center}
\end{figure}

We begin by defining the hereditary law on step histories, see Fig.~\ref{fig:plasticity}. Let
\begin{equation}
    \Pi=\{0=t_0<t_1<\cdots<t_M=T\} 
\end{equation}
be a partition, and let $\pi:[0,T]\to F$ be right-continuous and constant on every interval $[t_{k-1},t_k)$, with nodal values $\pi_k:=\pi(t_k)$, $k=0,\dots,M$. The corresponding stress history $\sigma:[0,T]\to C$ is defined recursively as
\begin{equation} \label{eq:discrete-play}
    \sigma_0:=P_C(\pi_0),
    \quad
    \sigma_k:=P_C\bigl(\sigma_{k-1}+\pi_k-\pi_{k-1}\bigr),
    \quad 
    k=1,\dots,M.
\end{equation}
We then set $\sigma(t):=\sigma_{k-1}$ for $t\in [t_{k-1},t_k)$, $k=1,\dots,M$, and $\sigma(T):=\sigma_M$.

Equation \eqref{eq:discrete-play} is simultaneously the resolvent discretization of the sweeping process \cite{Moreau1977, OrtizHerrera1981, MonteiroMarques1993, KunzeMonteiroMarques2000} and the classical closest-point return mapping of computational plasticity \cite{OrtizPinskyTaylor1983, OrtizMartin1989}. In particular, the discrete law is defined for arbitrary closed convex, possibly nonsmooth, elastic domains.

A first elementary point is that the discrete output does not depend on redundant nodes.

\begin{lemma}[Refinement invariance] \label{lem:refinement}
Let $\pi$ be a step input on a partition $\Pi$ and let $\widetilde\Pi$ be a refinement of $\Pi$. If $\pi$ is regarded as a step input on $\widetilde\Pi$ by repeating the same values on the refined subintervals, then the stress history produced by \eqref{eq:discrete-play} is unchanged.
\end{lemma}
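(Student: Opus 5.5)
The plan is to reduce to a single inserted node and then argue by induction on the number of added nodes. Every refinement $\widetilde\Pi$ of $\Pi$ is obtained from $\Pi$ by inserting finitely many points, one at a time. It therefore suffices to treat the case in which a single point $s$ is inserted into one interval $(t_{k-1},t_k)$ of $\Pi$, and then to iterate.

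For the single insertion, we write $\widetilde\Pi=\{t_0<\cdots<t_{k-1}<s<t_k<\cdots<t_M\}$. The nodal values of $\pi$ viewed on $\widetilde\Pi$ are the original $\pi_0,\dots,\pi_{k-1}$, then $\widetilde\pi(s)=\pi_{k-1}$ (since $\pi$ is constant on $[t_{k-1},t_k)$), then $\pi_k,\dots,\pi_M$. We then run the recursion \eqref{eq:discrete-play} on $\widetilde\Pi$:
\begin{itemize}
\item Up to the node $t_{k-1}$ the inputs agree with the original ones, so the stresses coincide with $\sigma_0,\dots,\sigma_{k-1}$.
\item At the new node $s$ the update reads $\widetilde\sigma_s=P_C(\sigma_{k-1}+\pi_{k-1}-\pi_{k-1})=P_C(\sigma_{k-1})=\sigma_{k-1}$. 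This uses $\sigma_{k-1}\in C$, which holds because every $\sigma_j$ is a projection onto $C$, so that $P_C$ fixes it.
\item At $t_k$ the update is $P_C(\widetilde\sigma_s+\pi_k-\pi(s))=P_C(\sigma_{k-1}+\pi_k-\pi_{k-1})=\sigma_k$.
\end{itemize}
From $t_k$ on, the recursion is identical to the original one, so all subsequent nodal stresses agree.

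It remains to compare the piecewise-constant stress histories, not just the nodal values. On $\widetilde\Pi$ the stress equals $\sigma_{k-1}$ on $[t_{k-1},s)$ and $\widetilde\sigma_s=\sigma_{k-1}$ on $[s,t_k)$. It is therefore $\sigma_{k-1}$ on all of $[t_{k-1},t_k)$, exactly as before. On all other intervals, including the terminal value at $T$, the two histories agree trivially. Induction on the number of inserted points concludes the argument.

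There is no real obstacle here. The only substantive point is the idempotence $P_C(\sigma)=\sigma$ for $\sigma\in C$, together with the fact that every discrete stress lies in $C$ by construction, including $\sigma_0=P_C(\pi_0)$. The case where the new point is inserted into the first interval is covered by the same computation with $k=1$, since $\sigma_0\in C$.
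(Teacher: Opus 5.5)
Your proof is correct and takes the same route as the paper. You reduce to a single inserted node, note that the refined increments are $0$ and $\pi_k-\pi_{k-1}$, so the first update leaves $\sigma_{k-1}$ unchanged and the second reproduces $\sigma_k$, and then iterate. You also make explicit two points the paper leaves implicit: the idempotence $P_C(\sigma_{k-1})=\sigma_{k-1}$ (since $\sigma_{k-1}\in C$), and the agreement of the piecewise-constant histories on $[t_{k-1},t_k)$.
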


\begin{proof}
It suffices to insert a single extra node $\tau\in(t_{k-1},t_k)$ and to note that the two corresponding increments are
\begin{equation}
\pi(\tau)-\pi(t_{k-1})=0,
\quad
\pi(t_k)-\pi(\tau)=\pi(t_k)-\pi(t_{k-1}).
\end{equation}
Hence the first refined update leaves the stress unchanged and the second reproduces the original update. Iterating proves the claim.
\end{proof}

The discrete evolution enjoys the expected range, variation, and stability properties.

\begin{theorem}[Discrete stability]
\label{thm:discrete-stability}
Let $\pi$ be a step input and let $\sigma$ be defined by \eqref{eq:discrete-play}. Then:
\begin{enumerate}
\item[(i)] $\sigma_k\in C$ for every $k=0,\dots,M$.
\item[(ii)] $\displaystyle \Var(\sigma;[0,T])\le \Var(\pi;[0,T])$.
\item[(iii)] If $\pi^1,\pi^2$ are step inputs on a common partition and $\sigma^1,\sigma^2$ are the corresponding outputs, then
\begin{equation}
  \|\sigma^1-\sigma^2\|_{L^\infty([0,T];F)}
  \le
  \|\pi^1(0)-\pi^2(0)\|_F+\Var(\pi^1-\pi^2;[0,T]).
  \label{eq:discrete-bv-stability}
\end{equation}
In particular,
\begin{equation}
\|\sigma^1-\sigma^2\|_{L^\infty([0,T];F)}
\le
\|\pi^1-\pi^2\|_{BV,0}.
\end{equation}
\end{enumerate}
\end{theorem}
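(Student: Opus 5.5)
Part (i) is immediate from the definition \eqref{eq:discrete-play}: every $\sigma_k$ is the value of $P_C$, and $P_C$ takes values in $C$. Since the right-continuous step function $\sigma$ only takes the values $\sigma_0,\dots,\sigma_M$, it follows that $\sigma(t)\in C$ for all $t$.

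For (ii), we apply Lemma~\ref{lem:variation-step-function} to both $\sigma$ and $\pi$, which are step functions on the common partition $\Pi$. This gives $\Var(\sigma;[0,T])=\sum_k\|\sigma_k-\sigma_{k-1}\|_F$ and $\Var(\pi;[0,T])=\sum_k\|\pi_k-\pi_{k-1}\|_F$, so it suffices to prove the incremental bound $\|\sigma_k-\sigma_{k-1}\|_F\le\|\pi_k-\pi_{k-1}\|_F$. Because $\sigma_{k-1}\in C$ by (i), we have $\sigma_{k-1}=P_C(\sigma_{k-1})$. The nonexpansiveness \eqref{eq:proj-lip} then yields
\begin{equation*}
\|\sigma_k-\sigma_{k-1}\|_F=\|P_C(\sigma_{k-1}+\pi_k-\pi_{k-1})-P_C(\sigma_{k-1})\|_F\le\|\pi_k-\pi_{k-1}\|_F .
\end{equation*}
Summing over $k$ proves (ii).

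For (iii), we work on the common partition; by Lemma~\ref{lem:refinement} we may pass to a common refinement without changing either output. Set $e_k:=\|\sigma^1_k-\sigma^2_k\|_F$. At $k=0$, \eqref{eq:proj-lip} gives $e_0\le\|\pi^1_0-\pi^2_0\|_F$. For $k\ge1$, nonexpansiveness followed by the triangle inequality gives
\begin{equation*}
e_k\le\|\sigma^1_{k-1}-\sigma^2_{k-1}+(\pi^1_k-\pi^1_{k-1})-(\pi^2_k-\pi^2_{k-1})\|_F\le e_{k-1}+\|(\pi^1-\pi^2)_k-(\pi^1-\pi^2)_{k-1}\|_F .
\end{equation*}
Telescoping these inequalities gives
\begin{equation*}
e_k\le\|\pi^1(0)-\pi^2(0)\|_F+\sum_{j\le k}\|(\pi^1-\pi^2)_j-(\pi^1-\pi^2)_{j-1}\|_F .
\end{equation*}
The sum on the right is bounded by $\Var(\pi^1-\pi^2;[0,T])$, using Lemma~\ref{lem:variation-step-function} applied to the step function $\pi^1-\pi^2$. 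Since $\sigma^1-\sigma^2$ only takes the values $\sigma^1_k-\sigma^2_k$, the uniform bound on the $e_k$ gives \eqref{eq:discrete-bv-stability}. The final inequality is just the definition of $\|\cdot\|_{BV,0}$.

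There is no real obstacle in this argument. The one point that needs care is the identity $\sigma_{k-1}=P_C(\sigma_{k-1})$ used in (ii), which depends on (i). The other is making sure that both inputs in (iii) live on the same partition, so that their increments can be subtracted node by node; this is handled by refinement invariance.
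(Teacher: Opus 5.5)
Your proof is correct and follows the paper's argument essentially step for step: for (ii) you use nonexpansiveness of $P_C$ against $P_C(\sigma_{k-1})=\sigma_{k-1}$ and then Lemma~\ref{lem:variation-step-function}, and for (iii) you telescope the one-step estimate $e_k\le e_{k-1}+\|(\pi^1-\pi^2)_k-(\pi^1-\pi^2)_{k-1}\|_F$. Your appeal to Lemma~\ref{lem:refinement} is not needed, since the statement already assumes a common partition, but it does no harm and extends the estimate to arbitrary pairs of step inputs.
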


\begin{proof}
Assertion (i) is immediate from the definition of the projection.

For (ii), by \eqref{eq:proj-lip},
\begin{equation}
  \|\sigma_k-\sigma_{k-1}\|_F
  =
  \left\|P_C\bigl(\sigma_{k-1}+\pi_k-\pi_{k-1}\bigr)-P_C(\sigma_{k-1})\right\|_F
  \le
  \|\pi_k-\pi_{k-1}\|_F .
\end{equation}
Summation over $k$ and Lemma~\ref{lem:variation-step-function} yield the result.

For (iii), set $\delta_k:=\pi_k^1-\pi_k^2$ and $e_k:=\sigma_k^1-\sigma_k^2$. By \eqref{eq:proj-lip},
\begin{equation}
\|e_0\|_F
=
\|P_C(\pi_0^1)-P_C(\pi_0^2)\|_F
\le
\|\delta_0\|_F,
\end{equation}
and, for $k\ge 1$,
\begin{equation}
\begin{split}
\|e_k\|_F
&=
\left\|
P_C\bigl(\sigma_{k-1}^1+\pi_k^1-\pi_{k-1}^1\bigr)
-
P_C\bigl(\sigma_{k-1}^2+\pi_k^2-\pi_{k-1}^2\bigr)
\right\|_F
\\
&\le
\|e_{k-1}+\delta_k-\delta_{k-1}\|_F
\le
\|e_{k-1}\|_F+\|\delta_k-\delta_{k-1}\|_F .
\end{split}
\end{equation}
Induction therefore gives
\begin{equation}
\|e_k\|_F
\le
\|\delta_0\|_F+\sum_{j=1}^k\|\delta_j-\delta_{j-1}\|_F
\le
\|\pi^1(0)-\pi^2(0)\|_F+\Var(\pi^1-\pi^2;[0,T]).
\end{equation}
Taking the maximum over $k$ proves \eqref{eq:discrete-bv-stability}.
\end{proof}

\begin{remark}[Failure of $L^\infty$ non-expansiveness in the vector case]\label{rem:failure-Linfty}
{\rm When $\dim F>1$, the stronger estimate
\begin{equation}
\|\sigma^1-\sigma^2\|_{L^\infty}
\le
\|\pi^1-\pi^2\|_{L^\infty}
\end{equation}
is false in general. For instance, let $F=\mathbb{R}^2$, let $C=[-1,1]^2$, and consider the two-step inputs
\begin{equation}
\pi_0^1=\pi_1^1=(-2,-2), \quad \pi_2^1=(-2,-1),
\end{equation}
\begin{equation}
\pi_0^2=\pi_1^2=(-2,-1), \quad \pi_2^2=(-1,-1).
\end{equation}
Then
\begin{equation}
\sup_{k=0,1,2}\|\pi_k^1-\pi_k^2\|_2=1,
\end{equation}
whereas the discrete outputs are
\begin{equation}
\sigma_0^1=\sigma_1^1=(-1,-1), \quad \sigma_2^1=(-1,0),
\end{equation}
\begin{equation}
\sigma_0^2=\sigma_1^2=(-1,-1), \quad \sigma_2^2=(0,-1),
\end{equation}
so that
\begin{equation}
\sup_{k=0,1,2}\|\sigma_k^1-\sigma_k^2\|_2=\sqrt{2}.
\end{equation}
Thus the vector play/stop update is stable in the $BV$ sense of \eqref{eq:discrete-bv-stability}, but not non-expansive in $L^\infty$.
} \hfill$\square$
\end{remark}

\begin{remark}[Scalar case]\label{rem:scalar-case} {\rm The scalar case is special. When $\dim F=1$, we may identify $F$ with $\mathbb{R}$ and represent the elastic domain as an interval $C=[\alpha,\beta]$. Then, for two scalar step inputs $\pi^1,\pi^2$ on a common partition, the corresponding complementary variable outputs $w^1,w^2$, see (\ref{eq:driver_and_comp}), satisfy the standard maximum-norm estimate
\begin{equation} \label{eq:maximum-norm-estimate}
\|w^1-w^2\|_{L^\infty(0,T)}
\le
\|\pi^1-\pi^2\|_{L^\infty(0,T)}.
\end{equation}
Indeed, in the scalar case the recursion is
\begin{equation}
w_k^i=\operatorname{med}\{\pi_k^i-\beta,\;w_{k-1}^i,\;\pi_k^i-\alpha\},\quad
w_0^i=\operatorname{med}\{\pi_0^i-\beta,\;0,\;\pi_0^i-\alpha\} ,
\end{equation}
where $\operatorname{med}$ is the median of three real numbers, i.~e., $\operatorname{med}\{a,b,c\}$ is the number among $a$, $b$ and $c$ that lies between the other two, and we use the identity $\operatorname{proj}_{[a,c]}(b) = \operatorname{med}\{a,b,c\}$. Since $\operatorname{med}$ is $1$-Lipschitz in $\ell^\infty$, we get
\begin{equation}
\|w_0^1-w_0^2\|\le \|\pi^1-\pi^2\|_{L^\infty(0,T)},
\end{equation}
and
\begin{equation}
\|w_k^1-w_k^2\|\le \max\{\|\pi_k^1-\pi_k^2\|,\;\|w_{k-1}^1-w_{k-1}^2\|\}\le \|\pi^1-\pi^2\|_{L^\infty(0,T)}.
\end{equation}
Hence, by induction, $\|w_k^1-w_k^2\|\le \|\pi^1-\pi^2\|_{L^\infty(0,T)}$ for all $k$, and, taking the maximum in $k$ (\ref{eq:maximum-norm-estimate}) follows. The conclusion extends to two arbitrary scalar step inputs $\pi^1,\pi^2$ by simply reducing them to a common partition, see Lemma~\ref{lem:refinement}. Thus, scalar $L^\infty$ non-expansiveness holds for the complementary variable. 

By contrast, the stress variable $\sigma$ need not be non-expansive even in the scalar case. For instance, let $C=[-1,1]$, and choose $\pi^1=(-3,-3,-1)$ and $\pi^2 = (-2,-2,-2)$. Then, $\|\pi^1-\pi^2\|_{L^\infty}=1$. Noting that $w_k = \operatorname{med}\{\pi_k-1,\;w_{k-1},\;\pi_k+1\}$, and $\sigma_k=\pi_k-w_k$, with initial value $w_0=\operatorname{med}\{\pi_0-1,\;0,\;\pi_0+1\}$, we compute for $\pi^1$: $w^1=(-2,-2,-2)$ and $\sigma^1=(-1,-1,1)$; and for $\pi^2$: $w^2=(-1,-1,-1)$ and $\sigma^2=(-1,-1,-1)$. Therefore,
\begin{equation}
\|\sigma^1-\sigma^2\|_{L^\infty}=\max\{0,0,2\}=2 ,
\quad
\|\pi^1-\pi^2\|_{L^\infty}=1,
\end{equation}
and the stress map $\pi\mapsto \sigma$ is not $L^\infty$-nonexpansive, as advertised. 
} \hfill$\square$
\end{remark}

Finally, we stress that if a step history $\pi$ takes values in $C$, then the corresponding stress history as defined above through \eqref{eq:discrete-play} leaves $\pi$ unchanged. 

\begin{proposition}\label{rel-proy}
    If $\pi:[0, T]\to C$ is a step input with corresponding output $\sigma$, then $\sigma\equiv\pi$. 
\end{proposition}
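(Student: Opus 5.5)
The plan is a straightforward induction on the nodal index in the recursion \eqref{eq:discrete-play}, using only the fact that $P_C$ restricts to the identity on $C$. Let $\Pi=\{0=t_0<\cdots<t_M=T\}$ be a partition on which $\pi$ is constant on each $[t_{k-1},t_k)$, with nodal values $\pi_k=\pi(t_k)\in C$. I will show that $\sigma_k=\pi_k$ for every $k=0,\dots,M$. Since $\sigma$ and $\pi$ are then right-continuous step functions on the same partition with the same nodal values, the piecewise definitions $\sigma(t)=\sigma_{k-1}$ and $\pi(t)=\pi_{k-1}$ on $[t_{k-1},t_k)$, together with $\sigma(T)=\sigma_M=\pi_M=\pi(T)$, give $\sigma\equiv\pi$ on $[0,T]$.

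For the base case, $\pi_0\in C$, and the minimizer of $\eta\mapsto\|\pi_0-\eta\|_F$ over $C$ is $\eta=\pi_0$, at which the distance is zero. By uniqueness of the metric projection, $P_C(\pi_0)=\pi_0$, so $\sigma_0=\pi_0$.

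For the inductive step, assume $\sigma_{k-1}=\pi_{k-1}$. The elastic predictor is then $\sigma_{k-1}+\pi_k-\pi_{k-1}=\pi_k$, which lies in $C$. The same fixed-point property of the projection gives $\sigma_k=P_C(\pi_k)=\pi_k$, which completes the induction.

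The only point that needs care is that the statement presumably intends an arbitrary step input. If the jump points of $\pi$ are not given in advance, any partition adapted to $\pi$ can be used. Lemma~\ref{lem:refinement} guarantees that the output does not depend on which adapted partition (or refinement) is chosen, so the conclusion holds regardless of that choice. I do not expect any genuine obstacle in this argument.
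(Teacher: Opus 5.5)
Your proposal is correct and follows the same induction on $k$ as the paper, which also rests on the fact that $P_C$ is the identity on $C$ (the paper phrases this as $P_C\circ P_C=P_C$). Your version simply spells out the base case, the inductive step via the predictor $\sigma_{k-1}+\pi_k-\pi_{k-1}=\pi_k$, and the choice of partition, all of which the paper leaves implicit.
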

\begin{proof}
The proof is elementary and can be organized through induction in $k=0, 1, \dots, M$. It only uses the fact that $P_C$ is a projection onto the convex set $C$, and as such $P_C\circ P_C=P_C$.
\end{proof}

\subsection{Extension to absolutely continuous histories}

The continuous-time history operator on absolutely continuous inputs is the vector play operator associated with the translated moving set $\pi(t)-C$  \cite{Moreau1977, MonteiroMarques1993, KunzeMonteiroMarques2000, Krejci1996, BrokateSprekels1996, Recupero2020}. Here, we consider absolute continuous inputs simply as a matter of convenience, in order to avoid the need for technical jump/completed-graph theory. In the present setting, the constitutive law can be stated directly in differential form, with no jump or completed-graph machinery.

\begin{theorem}[$W^{1,1}$ well-posedness]
\label{thm:bv-well-posedness}
For every $\pi\in W^{1,1}([0,T];F)$ there exists a unique $\sigma\in W^{1,1}([0,T];C)$ such that
\begin{equation} \label{eq:sweeping-state}
    \sigma(0)=P_C(\pi(0)),
    \quad
    \sigma(t)\in C , 
    \quad 
    \text{for every } t\in[0,T],
\end{equation}
and
\begin{equation} \label{eq:sweeping-inclusion}
    \dot\pi(t)-\dot\sigma(t)\in N_C(\sigma(t)) ,
    \quad 
    \text{for a.e.}\ t\in(0,T).
\end{equation}
Equivalently,
\begin{equation} \label{eq:sweeping-variational}
    \langle \dot\pi(t)-\dot\sigma(t),\, z-\sigma(t)\rangle_F\le 0 ,
    \quad 
    \text{for a.e.}\ t\in(0,T),\ \forall z\in C.
\end{equation}
\end{theorem}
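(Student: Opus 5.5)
Uniqueness and the equivalence of \eqref{eq:sweeping-inclusion} with \eqref{eq:sweeping-variational} come first, since they are short. The equivalence is the definition of the normal cone of a closed convex set in the Hilbert structure of $F$: $\nu\in N_C(\sigma)$ iff $\langle\nu,z-\sigma\rangle_F\le0$ for all $z\in C$. For uniqueness, let $\sigma^1,\sigma^2$ be two solutions with the same data. Set $e=\sigma^1-\sigma^2$ and $\nu^i:=\dot\pi-\dot\sigma^i\in N_C(\sigma^i)$. Then $\nu^1-\nu^2=-\dot e$, and the monotonicity \eqref{eq:normal-monotonicity} gives
\begin{equation*}
\tfrac12\tfrac{d}{dt}\|e\|_F^2=\langle\dot e,e\rangle_F\le0\quad\text{a.e.}
\end{equation*}
Since $\|e\|_F^2$ is absolutely continuous and $e(0)=0$, we get $e\equiv0$. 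The same computation with two inputs will also give the continuous analogue of Theorem~\ref{thm:discrete-stability}(iii), which I will use below.

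For existence I plan to pass to the limit in the discrete law \eqref{eq:discrete-play}. Fix $\pi\in W^{1,1}$ and the uniform partitions $\Pi_M$ with $t_k=kT/M$. Let $\pi_M$ be the piecewise-constant sampling of $\pi$ at the nodes, and let $\sigma_M$ be the discrete stress. Let $\hat\sigma_M$ and $\hat\pi_M$ be the piecewise-linear interpolants of the nodal values $\sigma_k$ and $\pi(t_k)$.

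The key a priori bound refines Theorem~\ref{thm:discrete-stability}(ii) to a local statement: $\|\sigma_k-\sigma_{k-1}\|_F\le\|\pi(t_k)-\pi(t_{k-1})\|_F\le\int_{t_{k-1}}^{t_k}\|\dot\pi\|_F$. This gives $|\dot{\hat\sigma}_M|\le$ the piecewise average of $\|\dot\pi\|_F$. These averages converge in $L^1$ to $\|\dot\pi\|_F$ and are therefore uniformly integrable. Hence the family $\{\hat\sigma_M\}$ is equi-absolutely-continuous, and it is bounded because $\sigma_0=P_C(\pi(0))$. By Arzel\`a--Ascoli (in finite dimension), a subsequence $\hat\sigma_M\to\sigma$ uniformly. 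Dunford--Pettis gives $\dot{\hat\sigma}_M\rightharpoonup\dot\sigma$ weakly in $L^1$, so $\sigma\in W^{1,1}$ with $\int_s^t\|\dot\sigma\|\le\int_s^t\|\dot\pi\|$. Closedness of $C$ yields $\sigma(t)\in C$, and $\sigma(0)=P_C(\pi(0))$ holds trivially. The step functions $\sigma_M$ also converge uniformly to $\sigma$, because they differ from $\hat\sigma_M$ by at most the modulus of absolute continuity.

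The main obstacle is passing to the limit in the inclusion, because of the product $\langle\dot\pi-\dot\sigma,\sigma\rangle$. By \eqref{eq:proj-normal}, the discrete relation reads
\begin{equation*}
\langle(\pi(t_k)-\pi(t_{k-1}))-(\sigma_k-\sigma_{k-1}),\,z-\sigma_k\rangle_F\le0\quad\forall z\in C.
\end{equation*}
I will use the integrated form. For any continuous $z:[0,T]\to C$ and $0\le s<t\le T$, sum the relation over the nodes in $[s,t]$, taking $z=z(t_k)$ at each node. This gives
\begin{equation*}
\int_s^t\langle\dot{\hat\pi}_M-\dot{\hat\sigma}_M,\,\bar z_M-\bar\sigma_M\rangle_F\,dt\le0,
\end{equation*}
where bars denote the left-continuous step functions taking the value at $t_k$ on $(t_{k-1},t_k]$. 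Here $\bar z_M-\bar\sigma_M\to z-\sigma$ uniformly, $\dot{\hat\pi}_M\to\dot\pi$ strongly in $L^1$ (averages of an $L^1$ function), and $\dot{\hat\sigma}_M\rightharpoonup\dot\sigma$ weakly in $L^1$. A weakly convergent sequence paired against a uniformly convergent bounded one converges, so
\begin{equation*}
\int_s^t\langle\dot\pi-\dot\sigma,z-\sigma\rangle_F\le0.
\end{equation*}
Taking $z$ constant and applying Lebesgue differentiation, with a countable dense subset of $C$ and continuity in $z$, gives \eqref{eq:sweeping-variational} a.e. Uniqueness then shows that the whole sequence converges, not just a subsequence.
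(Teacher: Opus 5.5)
Your proof is correct, and it takes a genuinely different route from the paper. The paper gives no argument for Theorem~\ref{thm:bv-well-posedness}: it identifies the statement with the standard absolutely continuous sweeping process for the moving set $\pi(t)-C$ and cites the literature.

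You instead give a self-contained Moreau catching-up (Rothe) proof built on the paper's own closest-point recursion \eqref{eq:discrete-play}. It uses only three ingredients: the $1$-Lipschitz property of $P_C$, \eqref{eq:proj-normal}, and monotonicity of $N_C$. The delicate points hold up:
\begin{itemize}
\item the integrand of your integrated discrete inequality is pointwise nonpositive on every $(t_{k-1},t_k)$, so non-nodal $s,t$ cause no difficulty;
\item weak $L^1$ convergence paired with uniform convergence passes to the limit;
\item the countable dense subset of $C$ handles the quantifier over $z$.
\end{itemize}

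Your route proves more than the statement. It establishes the paper's narrative claim that $\mathcal P(\pi)$ is the limit of the return mapping: by uniqueness, the whole sequence of discrete stresses converges uniformly. It also yields Theorem~\ref{thm:regularity}(ii)--(iv) almost for free:
\begin{itemize}
\item causality follows from uniqueness on $[0,t_\ast]$;
\item the $BV$-to-$L^\infty$ estimate follows from your two-input monotonicity computation, which is the same one the paper uses;
\item $\|\dot\sigma\|_F\le\|\dot\pi\|_F$ a.e.\ follows by Lebesgue differentiation of your localized bound $\int_s^t\|\dot\sigma\|_F\le\int_s^t\|\dot\pi\|_F$, bypassing the two-sided tangent-cone argument.
\end{itemize}

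The citation route buys generality. Your compactness step (Arzel\`a--Ascoli and Dunford--Pettis) uses $\dim F<\infty$. The classical convex-case proofs instead obtain convergence of the catching-up scheme directly from monotonicity, via a Cauchy argument valid in any Hilbert space. The cited theory also supplies the $BV$/completed-graph operator $\mathcal P_{BV}$ and Proposition~\ref{prop:BV-play-stability}. The paper needs these later, and your $W^{1,1}$ argument does not cover them.
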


Theorem~\ref{thm:bv-well-posedness} is the standard absolutely continuous sweeping-process formulation for the translated convex set $\pi(t)-C$, see \cite{Moreau1977, MonteiroMarques1993, KunzeMonteiroMarques2000, Krejci1996, BrokateSprekels1996, Recupero2020}.

\begin{definition} \label{def:play-operator}
For $\pi\in W^{1,1}([0,T];F)$, the unique history $\sigma$ set forth by Theorem~\ref{thm:bv-well-posedness} defines an operator $\mathcal{P} : W^{1,1}([0,T];F) \to W^{1,1}([0,T];C)$, with the property that $\sigma = \mathcal{P}(\pi)$. We refer to $\mathcal{P}$ as the \emph{elastic-plastic history} operator, or \emph{play} operator, associated with the elastic domain $C$.
\end{definition}

More generally, if $z\in BV([0,T];F)$ is right-continuous, then the classical completed-graph/catching-up construction yields a unique right-continuous output $\mathcal P_{BV}(z)$ \cite{Moreau1977, MonteiroMarques1993, KunzeMonteiroMarques2000, Krejci1996, BrokateSprekels1996, Recupero2020}. For step inputs this output coincides with the discrete closest-point recursion above. If $z\in W^{1,1}([0,T];F)$, then $\mathcal P_{BV}(z)=\mathcal P(z)$. We shall use the following standard stability property of the BV play operator.

\begin{proposition}[$BV$ stability of the completed-graph play operator]
\label{prop:BV-play-stability}
Let $z^1,z^2\in BV([0,T];F)$ be right-continuous. Then
\begin{equation}
\|\mathcal P_{BV}(z^1)-\mathcal P_{BV}(z^2)\|_{L^\infty([0,T];F)}
\le
\|z^1(0)-z^2(0)\|_F+\Var(z^1-z^2;[0,T]).
\label{eq:BV-play-stability}
\end{equation}
Moreover,
\begin{equation}
\Var(\mathcal P_{BV}(z^1);[0,T])\le \Var(z^1;[0,T]).
\end{equation}
\end{proposition}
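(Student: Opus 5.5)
The plan is to reduce both estimates to the discrete ones in Theorem~\ref{thm:discrete-stability} by approximating with step inputs, and then pass to the limit using a convergence property of the completed-graph/catching-up construction.

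\textbf{Step 1: step approximations.} Given right-continuous $z^1,z^2\in BV([0,T];F)$, we construct step inputs $z^i_m$ on a \emph{common} partition $\Pi_m$, obtained by sampling $z^i$ at the partition nodes, with the following properties.
\begin{enumerate}
\item[(a)] $z^i_m(0)=z^i(0)$.
\item[(b)] $\Var(z^i_m)\le \Var(z^i)$ and $\Var(z^1_m-z^2_m)\le\Var(z^1-z^2)$. Both follow from Lemma~\ref{lem:variation-step-function} together with the definition of variation as a supremum over partitions, since the sampled sums are sums over a particular partition.
\item[(c)] $z^i_m\to z^i$ in a sense strong enough for the play operator to be continuous.
\end{enumerate}
To get (c), we take $\Pi_m$ to contain the equal-variation nodes of Lemma~\ref{lem:step-approx} for the three functions $z^1$, $z^2$ and $z^1-z^2$ simultaneously, with mesh tending to zero. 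We then refine so that every jump of size larger than $1/m$ is a node. Refinement does not change step outputs, by Lemma~\ref{lem:refinement}.

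\textbf{Step 2: discrete estimate.} Theorem~\ref{thm:discrete-stability}(iii) applied to $z^1_m,z^2_m$, combined with (a) and (b), gives
\begin{equation}
\|\mathcal P_{BV}(z^1_m)-\mathcal P_{BV}(z^2_m)\|_{L^\infty}\le \|z^1(0)-z^2(0)\|_F+\Var(z^1-z^2;[0,T]).
\end{equation}
Similarly, Theorem~\ref{thm:discrete-stability}(ii) gives $\Var(\mathcal P_{BV}(z^1_m))\le\Var(z^1)$. Here we use that for step inputs $\mathcal P_{BV}$ coincides with the recursion \eqref{eq:discrete-play}.

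\textbf{Step 3: passage to the limit.} This is the main obstacle. We must show that $\mathcal P_{BV}(z^i_m)(t)\to\mathcal P_{BV}(z^i)(t)$ for every $t$, or at least for all $t$ outside a countable set, using right-continuity to recover the remaining points. The plan is to invoke the standard result that the catching-up algorithm converges: outputs of step approximations with uniformly bounded variation that converge to $z$ pointwise, with jumps eventually captured as nodes, converge pointwise to the completed-graph solution. This is essentially the definition of $\mathcal P_{BV}$ in the cited references (Moreau, Monteiro Marques, Krej\v{c}\'\i, Recupero).

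If a self-contained route is preferred, we would instead proceed as follows. By Helly's selection theorem and the uniform variation bound, the outputs have pointwise convergent subsequences. We would then show that any limit satisfies the variational characterization of the completed-graph play; that characterization determines the limit uniquely, so the whole sequence converges. This verification is the delicate part.

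\textbf{Step 4: conclusion.} Pointwise convergence together with the $m$-independent bound from Step 2 yields the $L^\infty$ estimate \eqref{eq:BV-play-stability}. Lower semicontinuity of the variation under pointwise convergence, which holds because $\Var$ is a supremum of finitely many continuous sums, yields
\begin{equation}
\Var(\mathcal P_{BV}(z^1))\le\liminf_m \Var(\mathcal P_{BV}(z^1_m))\le\Var(z^1).
\end{equation}
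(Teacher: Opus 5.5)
Your proposal follows the same route as the paper. You apply the discrete estimates of Theorem~\ref{thm:discrete-stability}(ii)--(iii) to step approximations on a common partition, then pass to the limit by appealing to the convergence of the catching-up construction in the cited literature; the paper does this too and does not prove that step either. One precision: state the convergence result you invoke for your sampled approximants, which converge uniformly because the partitions contain the equal-variation nodes (matching the paper's ``uniform limit''). Pointwise convergence with bounded variation is not enough by itself, since a short spurious spike in the input can permanently shift the stress.
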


\begin{proof}
For step inputs, \eqref{eq:BV-play-stability} is exactly Theorem~\ref{thm:discrete-stability}(iii), after replacing two step partitions by a common refinement, and the variation estimate is Theorem~\ref{thm:discrete-stability}(ii). The general right-continuous $BV$ case follows by the standard completed-graph/catching-up construction for multidimensional play operators with arbitrary $BV$ inputs: approximate the completed graphs by step catching-up paths, apply the preceding discrete estimates on common refinements, and pass to the uniform limit of the outputs. This is the BV-continuity property of the play operator proved in the cited references, in particular \cite{Krejci1996, BrokateSprekels1996, Recupero2020}.
\end{proof}

The operator $\mathcal{P}$ inherits the fundamental features of the discrete scheme.

\begin{theorem}[Basic properties of $\mathcal{P}$]
\label{thm:regularity}
The history operator $\mathcal{P}:W^{1,1}([0,T];F)\to W^{1,1}([0,T];C)$ satisfies:
\begin{enumerate}
\item[(i)] \emph{Range preservation:} $\mathcal{P}(\pi)(t)\in C$ for all $t\in[0,T]$.
\item[(ii)] \emph{Causality:} if $\pi^1=\pi^2$ on $[0,t_\ast]$, then $\mathcal{P}(\pi^1)=\mathcal{P}(\pi^2)$ on $[0,t_\ast]$.
\item[(iii)] \emph{$BV$-to-$L^\infty$ stability:}
\begin{equation}
  \|\mathcal{P}(\pi^1)-\mathcal{P}(\pi^2)\|_{L^\infty([0,T];F)}
  \le
  \|\pi^1(0)-\pi^2(0)\|_F+\Var(\pi^1-\pi^2;[0,T]).
  \label{eq:bv-play}
\end{equation}
In particular,
\begin{equation}
\|\mathcal{P}(\pi^1)-\mathcal{P}(\pi^2)\|_{L^\infty([0,T];F)}
\le
\|\pi^1-\pi^2\|_{BV,0}.
\end{equation}
\item[(iv)] \emph{Variation contraction:}
\begin{equation}
  \Var(\mathcal{P}(\pi);[0,T])\le \Var(\pi;[0,T]).
  \label{eq:var-play}
\end{equation}
\end{enumerate}
\end{theorem}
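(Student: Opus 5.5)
Part (i) is immediate from the definition of $\mathcal P$ through Theorem~\ref{thm:bv-well-posedness}: the solution is required to satisfy \eqref{eq:sweeping-state}, so $\sigma(t)\in C$ for every $t$.

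For causality (ii), I would use uniqueness in Theorem~\ref{thm:bv-well-posedness} on the shorter interval $[0,t_\ast]$. If $\pi^1=\pi^2$ on $[0,t_\ast]$, the restrictions of $\mathcal P(\pi^1)$ and $\mathcal P(\pi^2)$ to $[0,t_\ast]$ both solve \eqref{eq:sweeping-state}--\eqref{eq:sweeping-inclusion} with the same data there. The initial value $P_C(\pi(0))$ coincides, and $\dot\pi^1=\dot\pi^2$ a.e.\ on $(0,t_\ast)$. Applying uniqueness on $[0,t_\ast]$ (the theorem holds for any $T$) gives the claim. To keep the argument self-contained, I would instead give the direct energy argument. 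Set $e=\sigma^1-\sigma^2\in W^{1,1}$. Using \eqref{eq:sweeping-variational} with $z=\sigma^2(t)$ and $z=\sigma^1(t)$ respectively and adding, one gets a.e.\ on $(0,t_\ast)$
\begin{equation}
\tfrac12\tfrac{d}{dt}\|e\|_F^2=\langle \dot e,e\rangle_F\le \langle \dot\pi^1-\dot\pi^2,e\rangle_F=0,
\end{equation}
so $\|e\|_F^2$ is nonincreasing from $e(0)=0$.

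The same computation gives (iii) in the $W^{1,1}$ setting without invoking Proposition~\ref{prop:BV-play-stability}. With $\delta=\pi^1-\pi^2$, the monotonicity \eqref{eq:normal-monotonicity} applied to $\nu^i=\dot\pi^i-\dot\sigma^i\in N_C(\sigma^i)$ yields $\langle \dot e,e\rangle_F\le\langle\dot\delta,e\rangle_F\le \|\dot\delta\|_F\|e\|_F$ a.e. The main technical step is turning this into $\frac{d}{dt}\|e\|_F\le\|\dot\delta\|_F$. I would handle it by working with $\sqrt{\|e\|_F^2+\varepsilon^2}$, which is absolutely continuous with derivative $\langle\dot e,e\rangle_F/\sqrt{\|e\|_F^2+\varepsilon^2}\le\|\dot\delta\|_F$. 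Integrating and letting $\varepsilon\to0$ gives
\begin{equation}
\|e(t)\|_F\le\|e(0)\|_F+\int_0^t\|\dot\delta\|_F\,ds\le\|\delta(0)\|_F+\Var(\delta;[0,T]),
\end{equation}
using \eqref{eq:proj-lip} for $e(0)$ and the $W^{1,1}$ formula for the variation. This proves \eqref{eq:bv-play}. Alternatively, one could simply cite Proposition~\ref{prop:BV-play-stability} together with $\mathcal P_{BV}=\mathcal P$ on $W^{1,1}$.

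For (iv), I would show $\|\dot\sigma\|_F\le\|\dot\pi\|_F$ a.e. At a.e.\ $t$, both $\sigma$ and $\pi$ are differentiable, and $t$ is a Lebesgue point of $\dot\sigma$ and $\dot\pi$ where the inclusion holds. Since $\sigma(s)\in C$ for all $s$, inserting $z=\sigma(t\pm h)$ in \eqref{eq:sweeping-variational}, dividing by $h$ and letting $h\to0$ gives $\langle\dot\pi-\dot\sigma,\dot\sigma\rangle_F\le0$ and $\ge0$. Hence $\langle\dot\pi-\dot\sigma,\dot\sigma\rangle_F=0$, so $\|\dot\sigma\|_F^2=\langle\dot\pi,\dot\sigma\rangle_F\le\|\dot\pi\|_F\|\dot\sigma\|_F$. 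Integrating gives \eqref{eq:var-play}. The delicate point is the one-sided limit: the inequality at $t$ is only available a.e., so the test points must be taken at a fixed good $t$ with $z=\sigma(t\pm h)$, which is legitimate because $z$ ranges over all of $C$. As a fallback, (iv) also follows from Proposition~\ref{prop:BV-play-stability}.
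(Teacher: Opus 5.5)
Your proposal is correct and takes essentially the same route as the paper. Part (iii) is the same monotonicity/energy estimate for $\sigma^1-\sigma^2$, and part (iv) is the same two-sided argument giving $\langle\dot\pi-\dot\sigma,\dot\sigma\rangle_F=0$; you phrase it directly through the variational inequality with $z=\sigma(t\pm h)$, where the paper uses $\pm\dot\sigma(t)\in T_C(\sigma(t))$ and $N_C=T_C^\circ$. Beyond that, you only add self-containment: the paper cites the literature for (i)--(ii) while you derive causality from uniqueness or the energy inequality, and your $\sqrt{\|e\|_F^2+\varepsilon^2}$ regularization makes rigorous the step $\frac{d}{dt}\|u\|_F\le\|\dot v\|_F$, which the paper attributes to a standard differential inequality.
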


\begin{proof}
Assertions (i) and (ii) are standard consequences of the sweeping-process formulation of the vector play operator; see \cite{Moreau1977, MonteiroMarques1993, KunzeMonteiroMarques2000, Krejci1996, BrokateSprekels1996, Recupero2020}. 

For (iii), set
\begin{equation}
\sigma^i:=\mathcal{P}(\pi^i) ,
\quad
u:=\sigma^1-\sigma^2 ,
\quad
v:=\pi^1-\pi^2 .
\end{equation}
Then, for a.e.~$t\in(0,T)$ ,
\begin{equation}
\dot u=\dot v-\eta^1+\eta^2 ,
\quad
\eta^i:=\dot\pi^i-\dot\sigma^i\in N_C(\sigma^i) .
\end{equation}
and, by the monotonicity of the normal cone, see (\ref{eq:normal-monotonicity}), 
\begin{equation}
\langle \eta^1(t)-\eta^2(t),\, u(t)\rangle_F\ge 0 ,
\quad
\text{a.e.~in}\ (0,T) .
\end{equation}
Therefore,
\begin{equation}
\frac{d}{dt}\frac12\|u(t)\|_F^2
=
\langle \dot u(t),u(t)\rangle_F
\le
\langle \dot v(t),u(t)\rangle_F
\le
\|\dot v(t)\|_F\,\|u(t)\|_F ,
\quad
\text{a.e.~in}\ (0,T) .
\end{equation}
By the standard differential inequality for the norm of an absolutely continuous map \cite{Brezis2011},
\begin{equation}
\frac{d}{dt}\|u(t)\|_F\le \|\dot v(t)\|_F ,
\quad
\text{a.e.~in}\ (0,T) ,
\end{equation}
integration yields
\begin{equation}
\|u(t)\|_F
\le
\|u(0)\|_F+\int_0^t\|\dot v(s)\|_F\,ds
\le
\|u(0)\|_F+\Var(v;[0,T]).
\end{equation}
Taking the supremum over $t\in[0,T]$ and using $\sigma^i(0)=P_C(\pi^i(0))$ together with the $1$-Lipschitz continuity of $P_C$ proves \eqref{eq:bv-play}.

For (iv), fix $\pi\in W^{1,1}([0,T];F)$ and set $\sigma:=\mathcal P(\pi)$ and $\eta:=\dot\pi-\dot\sigma\in N_C(\sigma)$ a.e.~in $(0,T)$. Since $\sigma\in W^{1,1}([0,T];F)$ and $\sigma(t)\in C$ everywhere in $[0,T]$, at every time $t$ where $\sigma$ is differentiable we have
\begin{equation} \label{eq:two-sided-tangent}
    \dot\sigma(t)\in T_C(\sigma(t))
    \quad\text{and}\quad
    -\dot\sigma(t)\in T_C(\sigma(t)),
\end{equation}
where $T_C(\sigma(t))$ denotes the tangent cone to $C$ at $\sigma(t)$, see \cite{Rockafellar:1970}. Indeed, if $h_j\downarrow 0$, then
\begin{equation}
\frac{\sigma(t+h_j)-\sigma(t)}{h_j}\in \frac{C-\sigma(t)}{h_j},
\quad
\frac{\sigma(t-h_j)-\sigma(t)}{h_j}\in \frac{C-\sigma(t)}{h_j},
\end{equation}
and both sequences converge to $\dot\sigma(t)$ and $-\dot\sigma(t)$, respectively. Since $\eta(t)\in N_C(\sigma(t))=T_C(\sigma(t))^{\circ}$, the polar cone of $T_C(\sigma(t))$ \cite{Rockafellar:1970}, \eqref{eq:two-sided-tangent} implies
\begin{equation}
\langle \eta(t),\dot\sigma(t)\rangle_F\le 0
\quad\text{and}\quad
\langle \eta(t),-\dot\sigma(t)\rangle_F\le 0 ,
\end{equation}
hence,
\begin{equation}
    \langle \eta(t),\dot\sigma(t)\rangle_F=0 .
\end{equation}
Therefore, 
\begin{equation}
\|\dot\sigma(t)\|_F^2
=
\langle \dot\sigma(t),\dot\pi(t)-\eta(t)\rangle_F
=
\langle \dot\sigma(t),\dot\pi(t)\rangle_F
\le
\|\dot\sigma(t)\|_F\,\|\dot\pi(t)\|_F,
\end{equation}
and
\begin{equation}
\|\dot\sigma(t)\|_F\le \|\dot\pi(t)\|_F ,
\end{equation} 
a.e.~in $(0,T)$. Integrating in time finally yields
\begin{equation}
\Var(\sigma;[0,T])
=
\int_0^T \|\dot\sigma(t)\|_F\,dt
\le
\int_0^T \|\dot\pi(t)\|_F\,dt
=
\Var(\pi;[0,T]),
\end{equation}
which is \eqref{eq:var-play}.
\end{proof}

In summary, the elastic-plastic history operator $\mathcal{P}$ is causal, stable from the $BV$ norm of the driving history into the $L^\infty$ norm of the stress history, and contractive with respect to total variation. These are precisely the constitutive properties that underlie the approximation analysis that follows.

\section{Optimal $N$-piece step approximation}
\label{sec:optimal}

The preceding analysis suggests representing hereditary histories by right-continuous step surrogates and then decoding them through the same constitutive law. By an \emph{encoder} we understand a map
\begin{equation} \label{eq:encoder}
    f_N : W^{1,1}([0,T];F) \to \mathcal{Z}_N,
\end{equation}
where $\mathcal{Z}_N$ denotes the class of right-continuous step functions with at most $N$ constant pieces. More precisely,
\begin{equation} \label{eq:ZN}
    \mathcal{Z}_N
    :=
    \left\{
        z:[0,T]\to F :
        \begin{array}{l}
        \exists\ \Pi_N=\{0=t_0<\cdots<t_M=T\},\ M\le N,\ (z_k)_{k=0}^M\subset F,\\
        z(t)=z_{k-1}\ \text{for }t\in [t_{k-1},t_k),\ k=1,\dots,M,\ \text{and } z(T)=z_M
        \end{array}
    \right\}.
\end{equation}
The set $\mathcal Z_N$ is not a linear space, since the sum of two step histories with at most $N$ pieces may have more than $N$ pieces. Nevertheless, $\mathcal Z_N\hookrightarrow BV([0,T];F)$.

For $z\in\mathcal Z_N$ we write
\begin{equation} \label{eq:discrete-decoder}
    g_N^\star(z):=\mathcal P_{BV}(z),
\end{equation}
which is exactly the closest-point catching-up recursion of Section~\ref{sec:hereditary} applied to the step input $z$. The corresponding approximate stress history generated by the encoded history $f_N(\pi)$ is
\begin{equation} \label{eq:encoder-decoder}
    \mathcal{P}_N(\pi):=g_N^\star(f_N(\pi)).
\end{equation}
Approximations of the form \eqref{eq:encoder-decoder} are \emph{encoder/decoder} approximations in the sense of information-based complexity \cite{NovakWozniakowski2008, NovakWozniakowski2010, NovakWozniakowski2012}. The encoder stores a finite history surrogate; the decoder is the hereditary material law itself.

It is useful to distinguish two kinds of finite code. An \emph{input code} approximates the driving history $\pi$ and is useful when one wants a reduced representation of the prescribed strain history. A \emph{constitutive code} is allowed to be material-law aware and may encode a surrogate of the stress history $\mathcal P(\pi)$, while still being decoded by the same step-input hereditary law. The minimax input error below addresses the first question. The minimax constitutive error addresses the second.

\subsection{Admissible surrogates and error functionals}

Given a corresponding encoder of the form (\ref{eq:encoder}), we distinguish the input approximation error
\begin{equation}
    A(f_N)
    :=
    \sup_{\pi\in  W^{1,1}([0,T];F)\setminus\{0\}}
    \frac
    {
        \|\pi-f_N(\pi)\|_{L^\infty([0,T];F)}
    }
    {
        \|\pi\|_{BV,0}
    } ,
\end{equation}
from the constitutive error
\begin{equation}
    E(f_N)
    :=
    \sup_{\pi\in  W^{1,1}([0,T];F)\setminus\{0\}}
    \frac
    {
        \|\mathcal{P}(\pi)-g_N^\star(f_N(\pi))\|_{L^\infty([0,T];F)}
    }
    {
        \|\pi\|_{BV,0}
    } .
\end{equation}

The best achievable values are, respectively,
\begin{equation}
    A_{N}
    :=
    \inf_{f_N: W^{1,1}([0,T];F)\to \mathcal{Z}_N} A(f_N),
    \quad
    E_{N}
    :=
    \inf_{f_N: W^{1,1}([0,T];F)\to \mathcal{Z}_N} E(f_N).
\end{equation}

Our main objective is to determine the sharp values of $A_N$ and $E_N$, and to identify what kind of encoder attains them.

\subsection{Input approximation}

A lower bound is already encoded in the simplest possible family of inputs, namely ramps contained entirely in the elastic domain.

\begin{lemma}[Sharp lower bound on a segment] \label{lem:quant}
Let $v\in F$ with $\|v\|_F=1$ and let
\begin{equation}
    \pi_\alpha(t):=\frac{\alpha t}{T}\,v, \quad t\in[0,T].
\end{equation}
Then for every $z\in\mathcal{Z}_N$,
\begin{equation} \label{eq:ramp-lower}
    \|\pi_\alpha-z\|_{L^\infty([0,T];F)}
    \geq
    \frac{\alpha}{2N}.
\end{equation}
\end{lemma}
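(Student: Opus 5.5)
The argument is a pigeonhole estimate on the image of the ramp, which is a straight segment of length $\alpha$ traversed at constant speed. We may assume $\alpha>0$ (for $\alpha\le 0$ the claim is trivial if $\alpha=0$, and otherwise we replace $v$ by $-v$). Fix $z\in\mathcal Z_N$ with partition $\{0=t_0<\cdots<t_M=T\}$, $M\le N$, and values $z_0,\dots,z_{M-1}$ on the intervals $I_k=[t_{k-1},t_k)$. Set $\delta:=\|\pi_\alpha-z\|_{L^\infty([0,T];F)}$. Since $\pi_\alpha$ is continuous and $z$ is constant on each $I_k$, the essential supremum over $I_k$ equals the supremum over $I_k$, and this is at least the supremum over its closure $\overline{I_k}$. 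Hence for every $t\in[t_{k-1},t_k]$ we have $\|\pi_\alpha(t)-z_{k-1}\|_F\le\delta$.

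Next we use the triangle inequality on each piece. For $t,s\in \overline{I_k}$,
\begin{equation}
\frac{\alpha|t-s|}{T}=\|\pi_\alpha(t)-\pi_\alpha(s)\|_F\le \|\pi_\alpha(t)-z_{k-1}\|_F+\|z_{k-1}-\pi_\alpha(s)\|_F\le 2\delta .
\end{equation}
Here we used $\|v\|_F=1$. Taking $t=t_k$ and $s=t_{k-1}$ gives $t_k-t_{k-1}\le 2\delta T/\alpha$ for each $k$.

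Summing over the $M\le N$ pieces yields
\begin{equation}
T=\sum_{k=1}^M (t_k-t_{k-1})\le \frac{2M\delta T}{\alpha}\le \frac{2N\delta T}{\alpha},
\end{equation}
that is, $\delta\ge \alpha/(2N)$, which is \eqref{eq:ramp-lower}.

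The only point that needs care is the $L^\infty$ issue. An essential supremum ignores null sets, so the bound on the closed intervals must come from continuity of $\pi_\alpha$ together with the constancy of $z$ on the half-open intervals of positive length. This is routine, since nonempty intervals have positive measure.

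The sharpness of the constant is not needed for the lemma. It is attained by placing $z_{k-1}$ at the midpoint value of $\pi_\alpha$ on equal subintervals, which matches the later equal-variation encoder.
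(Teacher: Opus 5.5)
Your proof is correct and is essentially the paper's argument: both bound the oscillation of the ramp on each constant piece by twice the uniform error and then apply a pigeonhole/summation over the at most $N$ pieces. The differences are cosmetic. You use the triangle inequality directly in $F$, whereas the paper first projects onto $\mathrm{span}\{v\}$ to reduce to a scalar step function. You also handle the essential-supremum point explicitly, which the paper passes over.
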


\begin{proof}
Fix $z\in \mathcal{Z}_N$. Let $P_v$ be the orthogonal projection onto $\mathrm{span}\{v\}$. Since $P_v$ is $1$-Lipschitz and $P_v\pi_\alpha=\pi_\alpha$, we have
\begin{equation}
\|\pi_\alpha-z\|_{L^\infty([0,T];F)}
\ge
\|\pi_\alpha-P_v z\|_{L^\infty([0,T];F)}.
\end{equation}
Thus we may assume $z(t)=s(t)v$, where $s$ is a scalar step function with at most $M\le N$ pieces. Set $r(t):=\alpha t/T$. On each interval $[t_{k-1},t_k)$ where $s\equiv c_k$, the oscillation of $r$ is
\begin{equation}
\Delta_k = r(t_k)-r(t_{k-1}) = \frac{\alpha}{T}(t_k-t_{k-1}) .
\end{equation}
Therefore,
\begin{equation}
\sup_{t\in[t_{k-1},t_k)} \|r(t)-c_k\| \ge \frac{\Delta_k}{2} ,
\end{equation}
and
\begin{equation}
\|r-s\|_{L^\infty(0,T)} \ge \max_k \frac{\Delta_k}{2}.
\end{equation}
Since $\sum_{k=1}^M \Delta_k = \alpha$, not all $\Delta_k$ can be smaller than $\alpha/M$. Hence, there exists $k$ such that $\Delta_k \ge {\alpha}/{M} \ge {\alpha}/{N}$, and
\begin{equation}
\|r-s\|_{L^\infty(0,T)} \ge \frac{\alpha}{2N} ,
\end{equation}
which proves the claim.
\end{proof}

The matching upper bound is obtained directly by sampling at equal increments of cumulative variation. This construction is minimax-optimal, although it is not asserted to be the pointwise Chebyshev-best step approximation of every individual vector-valued path.

\subsubsection{Sampled equal-variation encoder}

Let $\pi\in W^{1,1}([0,T];F)$ and set $V:=\Var(\pi;[0,T])$. If $V=0$, define $f_N^\star(\pi)\equiv \pi(0)$. Assume henceforth that $V>0$. Set
\begin{equation}
    \alpha(t):=\Var(\pi;[0,t]), \quad t\in[0,T].
\end{equation}
Because $\pi\in W^{1,1}([0,T];F)$ is continuous, $\alpha$ is continuous as well. For $k=0,\dots,N$ let
\begin{equation} \label{eq:eqvar-times}
    t_k:=\min\Bigl\{t\in[0,T]:\alpha(t)=\frac{kV}{N}\Bigr\},
\end{equation}
and choose
\begin{equation} \label{eq:tauk-cont}
    \tau_k\in[t_{k-1},t_k]
    \quad\text{such that}\quad
    \alpha(\tau_k)=\frac{(2k-1)V}{2N},
    \quad
    k=1,\dots,N.
\end{equation}
The \emph{sampled equal-variation encoder} is then defined as
\begin{equation} \label{eq:variation-midpoint}
    f_N^\star(\pi)(t):=\pi(\tau_k),
    \;\, t\in [t_{k-1},t_k), \;\, k=1,\dots,N,
    \quad \text{and} \quad
    f_N^\star(\pi)(T):=\pi(T) .
\end{equation}
By construction, $f_N^\star(\pi)\in \mathcal{Z}_N$.

\begin{lemma}[Uniform approximation]
\label{lem:upper-cont}
For every $\pi\in W^{1,1}([0,T];F)$,
\begin{equation} \label{eq:pointwise-upper-cont}
    \|\pi-f_N^\star(\pi)\|_{L^\infty([0,T];F)}
    \le
    \frac{1}{2N}\Var(\pi;[0,T]).
\end{equation}
\end{lemma}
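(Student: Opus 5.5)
The proof is a pointwise estimate on each piece of the equal-variation partition, after a check that the construction is well defined.

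First, the construction. Since $\alpha$ is continuous and nondecreasing with $\alpha(0)=0$ and $\alpha(T)=V$, the intermediate value theorem guarantees that each level set $\{\alpha=kV/N\}$ is nonempty and closed, so the minimum in \eqref{eq:eqvar-times} exists. We have $t_0=0$ and $t_{k-1}\le t_k$. Note that $t_N$ may be smaller than $T$ if $\alpha$ is constant near $T$; I would handle this by setting $t_N:=T$, which changes nothing below because $\alpha\equiv V$ on $[t_N,T]$. Again by the intermediate value theorem on $[t_{k-1},t_k]$, where $\alpha$ runs from $(k-1)V/N$ to $kV/N$, a point $\tau_k$ as in \eqref{eq:tauk-cont} exists. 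If some $t_{k-1}=t_k$, the corresponding piece is empty and is simply dropped, so $f_N^\star(\pi)\in\mathcal Z_N$.

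Next comes the main estimate. Fix $t\in[t_{k-1},t_k)$. Both $t$ and $\tau_k$ lie in $[t_{k-1},t_k]$. By the definition of the variation applied to the two-point partition of the interval between them, together with Lemma~\ref{lem:variation-additivity},
\begin{equation*}
\|\pi(t)-\pi(\tau_k)\|_F\le \Var\bigl(\pi;[\min(t,\tau_k),\max(t,\tau_k)]\bigr)=|\alpha(t)-\alpha(\tau_k)|.
\end{equation*}
Since $\alpha$ is monotone, $\alpha(t)\in[(k-1)V/N,\,kV/N]$. The value $\alpha(\tau_k)$ is the midpoint of that interval, so $|\alpha(t)-\alpha(\tau_k)|\le V/(2N)$. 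At $t=T$ the error vanishes by definition. Taking the supremum over $t$ gives \eqref{eq:pointwise-upper-cont}. If $V=0$, then $\pi$ is constant and the claim is trivial.

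The only delicate point is bookkeeping at the endpoints: one must make sure that every $t\in[0,T)$ lies in some $[t_{k-1},t_k)$ with $\alpha(t)$ in the right band. The key observation is $\alpha(t)\le\alpha(t_k)=kV/N$ for $t<t_k$, together with $\alpha(t)\ge\alpha(t_{k-1})$. Both follow from the monotonicity of $\alpha$ and the choice of the $t_k$ as first hitting times, which is where the continuity of $\alpha$ for $W^{1,1}$ inputs is essential.
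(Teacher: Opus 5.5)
Your proof is correct and follows the paper's argument. The paper also bounds $\|\pi(t)-\pi(\tau_k)\|_F$ by the variation between $t$ and $\tau_k$, and uses that $\tau_k$ splits $[t_{k-1},t_k]$ into two halves of variation $V/(2N)$ each, which is exactly your midpoint observation on $\alpha$. Your extra bookkeeping is sound and makes the construction cleaner: existence of $t_k$ and $\tau_k$ via continuity, dropping empty pieces, and resetting $t_N:=T$ when $\alpha$ reaches $V$ before $T$, a case the paper's definition leaves implicit.
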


\begin{proof}
If $t=T$, the estimate is trivial because $f_N^\star(\pi)(T)=\pi(T)$. Fix $k\in\{1,\dots,N\}$ and $t\in [t_{k-1},t_k)$. By continuity and the definition of $t_k$ and $\tau_k$,
\begin{equation}
\Var(\pi;[t_{k-1},\tau_k])=\Var(\pi;[\tau_k,t_k])=\frac{V}{2N} , 
\end{equation}
where we use the property $\operatorname{Var}(\pi;[a,b]) = \alpha(b) - \alpha(a)$. 
Therefore,
\begin{equation}
\begin{split}
    \|\pi(t)-\pi(\tau_k)\|_F
    & \le
    \Var(\pi;[\min\{t,\tau_k\},\max\{t,\tau_k\}])
    \\ & \le
    \max\left\{\Var(\pi;[t_{k-1},\tau_k]),\Var(\pi;[\tau_k,t_k])\right\}
    =
    \frac{V}{2N}.
\end{split}
\end{equation}
Taking the supremum over $t$ proves the claim.
\end{proof}

\begin{theorem}[Sharp minimax input approximation]
\label{thm:sharp-cont}
For every $N\in\mathbb{N}$,
\begin{equation}
    A_{N}=\frac{1}{2N}.
\end{equation}
Moreover, the value is attained by the sampled equal-variation encoder $f_N^\star$ defined by \eqref{eq:variation-midpoint}. In particular,
\begin{equation} \label{eq:sharp-upper-cont}
    \|\pi-f_N^\star(\pi)\|_{L^\infty([0,T];F)}
    \le
    \frac{1}{2N}\,\|\pi\|_{BV,0},
    \quad
    \forall \pi\in  W^{1,1}([0,T];F).
\end{equation}
\end{theorem}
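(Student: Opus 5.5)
The proof splits into an upper bound $A_N\le (2N)^{-1}$, attained by $f_N^\star$, and a matching lower bound $A_N\ge (2N)^{-1}$ valid for every encoder.

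\emph{Upper bound.} Lemma~\ref{lem:upper-cont} already gives
\begin{equation*}
\|\pi-f_N^\star(\pi)\|_{L^\infty([0,T];F)}\le \frac{1}{2N}\Var(\pi;[0,T]).
\end{equation*}
Since $\Var(\pi;[0,T])\le \|\pi(0)\|_F+\Var(\pi;[0,T])=\|\pi\|_{BV,0}$, this yields \eqref{eq:sharp-upper-cont}. Dividing by $\|\pi\|_{BV,0}>0$ for $\pi\ne 0$ and taking the supremum gives $A(f_N^\star)\le (2N)^{-1}$, hence $A_N\le (2N)^{-1}$. Two details need checking along the way. First, $f_N^\star(\pi)\in\mathcal Z_N$, which holds by construction, empty intervals being discarded when consecutive $t_k$ coincide. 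Second, the case $V=0$: there $f_N^\star(\pi)\equiv\pi(0)$, the error vanishes, and the bound is trivial.

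\emph{Lower bound.} Fix an arbitrary encoder $f_N$. Pick a unit vector $v\in F$ and $\alpha>0$, and test against the ramp $\pi_\alpha(t)=\alpha t v/T$, which lies in $W^{1,1}$ and is nonzero. Since $\pi_\alpha(0)=0$ and $\Var(\pi_\alpha;[0,T])=\alpha$, we have $\|\pi_\alpha\|_{BV,0}=\alpha$. Lemma~\ref{lem:quant}, applied with $z=f_N(\pi_\alpha)\in\mathcal Z_N$, gives
\begin{equation*}
\|\pi_\alpha-f_N(\pi_\alpha)\|_{L^\infty([0,T];F)}\ge \frac{\alpha}{2N}.
\end{equation*}
Hence $A(f_N)\ge \frac{\alpha/(2N)}{\alpha}=\frac{1}{2N}$. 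Because $f_N$ was arbitrary, taking the infimum gives $A_N\ge (2N)^{-1}$. Combined with the upper bound, $A_N=(2N)^{-1}$, and $f_N^\star$ attains the value.

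Neither direction is hard, because the substantive work is already done in Lemmas~\ref{lem:quant} and~\ref{lem:upper-cont}. The only point requiring care is the exact normalization. The lower-bound test input must have $\pi(0)=0$ so that the $\|\pi(0)\|_F$ term adds nothing to $\|\pi\|_{BV,0}$ and the ratio equals exactly $(2N)^{-1}$; the ramp through the origin satisfies this. The same observation shows why the bound is sharp even though the upper bound discards the $\|\pi(0)\|_F$ term. I would also note in passing that the $L^\infty$ norm is the essential supremum, and that step functions and continuous $\pi$ make the pointwise supremum and the essential supremum agree on the relevant half-open intervals.
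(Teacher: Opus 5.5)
Your proposal is correct and follows essentially the same route as the paper. You get the upper bound from Lemma~\ref{lem:upper-cont} together with $\Var(\pi;[0,T])\le\|\pi\|_{BV,0}$, and the lower bound by applying Lemma~\ref{lem:quant} to the ramp $\pi_\alpha$ through the origin, for which $\|\pi_\alpha\|_{BV,0}=\alpha$.
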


\begin{proof}
Lemma~\ref{lem:upper-cont} gives
\begin{equation}
    \|\pi-f_N^\star(\pi)\|_{L^\infty}
    \le
    \frac{1}{2N}\Var(\pi;[0,T])
    \le
    \frac{1}{2N}\|\pi\|_{BV,0}.
\end{equation}
Hence $A_{N}\le (2N)^{-1}$. For the reverse inequality, fix $\alpha>0$ and choose a unit vector $v\in F$. Let $f_N: W^{1,1}([0,T];F)\to \mathcal{Z}_N$ be any encoder. Applying Lemma~\ref{lem:quant} to $z=f_N(\pi_\alpha)$ gives
\begin{equation}
  \|\pi_\alpha-f_N(\pi_\alpha)\|_{L^\infty}
  \ge
  \frac{\alpha}{2N}.
\end{equation}
Because $\|\pi_\alpha\|_{BV,0}=\alpha$, it follows that $A(f_N)\ge (2N)^{-1}$. Taking the infimum over $f_N$ proves the theorem.
\end{proof}

\subsection{Constitutive consequences in the vector case}

The sharp input theorem does not by itself imply a sharp constitutive theorem. The reason is structural: in the vector case the play operator is stable from $BV$ to $L^\infty$, but it is not $L^\infty$-nonexpansive, as shown in Remark~\ref{rem:failure-Linfty}. Thus an $L^\infty$-optimal approximation of the input cannot simply be pushed through the decoder.

The constitutive minimax quantity $E_N$, however, does not require the encoder to be a pointwise approximation of $\pi$. It only requires the encoded object to belong to $\mathcal Z_N$ and to be decoded by the material law. This distinction permits the following stress-aware construction: compute the exact stress history $\sigma=\mathcal P(\pi)$, compress $\sigma$ by equal-variation sampling, and pass that admissible stress-valued step history through the decoder. Since admissible step histories are fixed points of the play operator, the decoder leaves the code unchanged.

\begin{definition}
Let $\mathcal{Z}_N(C)$ denote the set of right-continuous step histories in $\mathcal Z_N$ whose values lie in $C$:
\begin{equation}
    \mathcal Z_N(C):=\{z\in\mathcal Z_N:\ z(t)\in C\ \text{for all }t\in[0,T]\}.
\end{equation}
For $\sigma\in W^{1,1}([0,T];C)$, let ${S}_N^\star(\sigma)$ denote the sampled equal-variation approximation defined by \eqref{eq:eqvar-times}--\eqref{eq:variation-midpoint}, with $\pi$ replaced by $\sigma$.
\end{definition}

By Lemma~\ref{lem:upper-cont}, applied to the $C$-valued path $\sigma$, one has
\begin{equation} \label{eq:stress-eqvar-estimate}
    \|\sigma-{S}_N^\star(\sigma)\|_{L^\infty([0,T];F)}
    \le
    \frac{1}{2N}\,\Var(\sigma;[0,T]),
    \quad
    {S}_N^\star(\sigma)\in\mathcal Z_N(C).
\end{equation}
The fact that the samples remain in $C$ uses only that $\sigma(t)\in C$ for all $t$.

\begin{lemma}[Fixed points of the discrete decoder]
\label{lem:decoder-fixed-points}
For every $z\in\mathcal Z_N(C)$,
\begin{equation}
    g_N^\star(z)=z.
\end{equation}
Consequently, $g_N^\star(\mathcal Z_N)=\mathcal Z_N(C)$ and $g_N^\star\circ g_N^\star=g_N^\star$ on $\mathcal Z_N$.
\end{lemma}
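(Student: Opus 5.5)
The plan is to derive the three assertions in order, each from the previous one. The fixed-point property comes directly from Proposition~\ref{rel-proy}. The range identity and idempotence then follow from the fixed-point property combined with Theorem~\ref{thm:discrete-stability}(i).

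\emph{Step 1: fixed points.} Let $z\in\mathcal Z_N(C)$ with partition $\{0=t_0<\cdots<t_M=T\}$ and nodal values $z_k\in C$. By definition \eqref{eq:discrete-decoder}, $g_N^\star(z)=\mathcal P_{BV}(z)$ is the closest-point recursion \eqref{eq:discrete-play} applied to $z$. Proposition~\ref{rel-proy} then gives $g_N^\star(z)=z$. For completeness I would spell out the induction. First, $\sigma_0=P_C(z_0)=z_0$ because $z_0\in C$. Next, if $\sigma_{k-1}=z_{k-1}$, then
\begin{equation*}
\sigma_k=P_C(z_{k-1}+z_k-z_{k-1})=P_C(z_k)=z_k,
\end{equation*}
since $z_k\in C$. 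The piecewise-constant extension of $(\sigma_k)$ on the same partition therefore coincides with $z$, including the value at $t=T$.

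\emph{Step 2: range.} Take an arbitrary $z\in\mathcal Z_N$ and set $\sigma=g_N^\star(z)$. Three facts are needed.
\begin{itemize}
\item By Theorem~\ref{thm:discrete-stability}(i), every nodal value lies in $C$.
\item By construction, $\sigma$ is right-continuous and constant on each $[t_{k-1},t_k)$ of the same partition, with at most $N$ pieces. Hence $\sigma\in\mathcal Z_N(C)$.
\item Conversely, Step~1 gives $z=g_N^\star(z)$ for each $z\in\mathcal Z_N(C)$, so $\mathcal Z_N(C)\subset g_N^\star(\mathcal Z_N)$.
\end{itemize}
Together these yield $g_N^\star(\mathcal Z_N)=\mathcal Z_N(C)$.

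\emph{Step 3: idempotence.} For $z\in\mathcal Z_N$, Step~2 places $g_N^\star(z)$ in $\mathcal Z_N(C)$. Step~1 then gives $g_N^\star(g_N^\star(z))=g_N^\star(z)$.

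\emph{Expected obstacle.} The only delicate point is bookkeeping in Step~2. One must check that the output of the recursion is a genuine element of $\mathcal Z_N$: the same partition with at most $N$ pieces, the correct right-continuous convention, and the endpoint value $\sigma(T)=\sigma_M$. One must also check that $\mathcal P_{BV}$ on step inputs is exactly the discrete recursion. That identification is stated in the text, and refinement invariance (Lemma~\ref{lem:refinement}) makes the choice of partition irrelevant. Consecutive equal nodal values in the output do not cause a problem, because the definition of $\mathcal Z_N$ only requires at most $N$ pieces.
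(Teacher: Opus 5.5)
Your proposal is correct and follows the paper's proof. The fixed-point property comes from Proposition~\ref{rel-proy}; your explicit induction, which uses $P_C(x)=x$ for $x\in C$, is a welcome filling-in of what the paper calls elementary. The inclusion $g_N^\star(\mathcal Z_N)\subset\mathcal Z_N(C)$ follows from range preservation on the same partition, the reverse inclusion from the fixed-point property, and idempotence is then immediate, exactly as in the paper.
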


\begin{proof}
If $z\in\mathcal Z_N(C)$, Proposition~\ref{rel-proy} shows that the step-input constitutive response to $z$ is exactly $z$. Hence $g_N^\star(z)=z$. Since every decoded history takes values in $C$, $g_N^\star(\mathcal Z_N)\subset\mathcal Z_N(C)$. The reverse inclusion follows from the fixed-point property just proved. The projector identity follows immediately.
\end{proof}

\begin{theorem}[Sharp vector constitutive minimax theorem]
\label{thm:sharp-vector-constitutive}
Assume that $0\in\operatorname{int}C$. Then, for every $N\in\mathbb N$,
\begin{equation} \label{eq:sharp-vector-constitutive}
    E_N=\frac{1}{2N}.
\end{equation}
Moreover, the value is attained by the stress-aware encoder
\begin{equation} \label{eq:stress-aware-encoder}
    h_N^\star(\pi):={S}_N^\star(\mathcal P(\pi)).
\end{equation}
Equivalently,
\begin{equation} \label{eq:sharp-vector-upper}
    \|\mathcal P(\pi)-g_N^\star(h_N^\star(\pi))\|_{L^\infty([0,T];F)}
    \le
    \frac{1}{2N}\,\|\pi\|_{BV,0},
    \quad
    \forall\,\pi\in W^{1,1}([0,T];F).
\end{equation}
\end{theorem}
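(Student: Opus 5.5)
The proof splits into an upper bound, obtained from the stress-aware encoder $h_N^\star$, and a matching lower bound valid for every encoder.

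\emph{Upper bound.} Fix $\pi\in W^{1,1}([0,T];F)$ and set $\sigma:=\mathcal P(\pi)\in W^{1,1}([0,T];C)$. By \eqref{eq:stress-eqvar-estimate}, $h_N^\star(\pi)=S_N^\star(\sigma)$ belongs to $\mathcal Z_N(C)$, so it is an admissible code. Lemma~\ref{lem:decoder-fixed-points} then gives $g_N^\star(h_N^\star(\pi))=S_N^\star(\sigma)$. We chain \eqref{eq:stress-eqvar-estimate} with the variation contraction of Theorem~\ref{thm:regularity}(iv):
\begin{equation*}
\|\mathcal P(\pi)-g_N^\star(h_N^\star(\pi))\|_{L^\infty}\le \frac{1}{2N}\Var(\sigma;[0,T])\le\frac{1}{2N}\Var(\pi;[0,T])\le \frac{1}{2N}\|\pi\|_{BV,0}.
\end{equation*}
This proves \eqref{eq:sharp-vector-upper} and hence $E_N\le E(h_N^\star)\le (2N)^{-1}$. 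This step does not use $0\in\operatorname{int}C$.

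\emph{Lower bound.} Here the hypothesis $0\in\operatorname{int}C$ enters. Choose $r>0$ with the closed ball $\overline B_F(0,r)\subset C$, and fix a unit vector $v\in F$. For $0<\alpha\le r$, consider the ramp $\pi_\alpha(t)=(\alpha t/T)v$ of Lemma~\ref{lem:quant}. It takes values in the segment $[0,\alpha v]\subset C$.

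We claim $\mathcal P(\pi_\alpha)=\pi_\alpha$. To verify this, take $\sigma=\pi_\alpha$. Then $\sigma(0)=0=P_C(\pi_\alpha(0))$, $\sigma(t)\in C$ for all $t$, and $\dot\pi_\alpha-\dot\sigma=0\in N_C(\sigma(t))$. So $\sigma=\pi_\alpha$ satisfies \eqref{eq:sweeping-state}--\eqref{eq:sweeping-inclusion}, and uniqueness in Theorem~\ref{thm:bv-well-posedness} gives the claim.

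Now let $f_N$ be any encoder. The decoded output $g_N^\star(f_N(\pi_\alpha))$ is an element of $\mathcal Z_N$; by Lemma~\ref{lem:decoder-fixed-points} it even lies in $\mathcal Z_N(C)$, though we only need membership in $\mathcal Z_N$. Lemma~\ref{lem:quant} therefore applies with $z=g_N^\star(f_N(\pi_\alpha))$ and gives
\begin{equation*}
\|\mathcal P(\pi_\alpha)-g_N^\star(f_N(\pi_\alpha))\|_{L^\infty}=\|\pi_\alpha-z\|_{L^\infty}\ge \frac{\alpha}{2N}.
\end{equation*}
Since $\|\pi_\alpha\|_{BV,0}=0+\alpha=\alpha$, we get $E(f_N)\ge (2N)^{-1}$. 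Taking the infimum over $f_N$ yields $E_N\ge(2N)^{-1}$, and together with the upper bound this proves \eqref{eq:sharp-vector-constitutive}.

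The main points needing care are the following.
\begin{itemize}
\item The lower bound must be stated for the decoded output rather than the code. Lemma~\ref{lem:quant} holds for every element of $\mathcal Z_N$, so the argument works regardless of what the encoder does.
\item The ramp must lie inside $C$ so that $\mathcal P$ acts as the identity on it. This is exactly where $0\in\operatorname{int}C$ is used: otherwise $P_C(0)\neq0$ could shift the output, or the output could saturate and carry less variation than the input.
\item Any small positive $\alpha$ suffices, because the ratio $\alpha/(2N\alpha)=(2N)^{-1}$ does not depend on $\alpha$.
\end{itemize}
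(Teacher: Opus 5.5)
Your proposal is correct and matches the paper's proof essentially step for step. The upper bound combines the fixed-point property of Lemma~\ref{lem:decoder-fixed-points}, the equal-variation estimate \eqref{eq:stress-eqvar-estimate} for $\sigma=\mathcal P(\pi)$, and the variation contraction of Theorem~\ref{thm:regularity}(iv); the lower bound uses the elastic ramp $\pi_\alpha$ with $0<\alpha\le r$, uniqueness in Theorem~\ref{thm:bv-well-posedness} to obtain $\mathcal P(\pi_\alpha)=\pi_\alpha$, and Lemma~\ref{lem:quant} applied to the decoded output (your closed ball $\overline{B}_F(0,r)\subset C$ ensures directly that the endpoint $\alpha v$ with $\alpha=r$ lies in $C$, which in the paper's version follows from closedness of $C$).
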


\begin{proof}
Let $\pi\in W^{1,1}([0,T];F)$ and set $\sigma:=\mathcal P(\pi)$. Then $\sigma\in W^{1,1}([0,T];C)$ and, by Theorem~\ref{thm:regularity}(iv),
\begin{equation}
    \Var(\sigma;[0,T])\le \Var(\pi;[0,T]).
\end{equation}
The encoder \eqref{eq:stress-aware-encoder} belongs to $\mathcal Z_N(C)$, hence Lemma~\ref{lem:decoder-fixed-points} gives
\begin{equation}
    g_N^\star(h_N^\star(\pi))
    =h_N^\star(\pi)
    ={S}_N^\star(\sigma).
\end{equation}
Using \eqref{eq:stress-eqvar-estimate},
\begin{equation}
\begin{split}
    \|\mathcal P(\pi)-g_N^\star(h_N^\star(\pi))\|_{L^\infty}
    & =
    \|\sigma-{S}_N^\star(\sigma)\|_{L^\infty}
    \\
    & \le
    \frac{1}{2N}\Var(\sigma;[0,T])
    \le
    \frac{1}{2N}\Var(\pi;[0,T])
    \le
    \frac{1}{2N}\|\pi\|_{BV,0}.
\end{split}
\end{equation}
Thus $E_N\le(2N)^{-1}$.

For the reverse inequality, use the same elastic ramp that proves the input lower bound. Since $0\in\operatorname{int}C$, there is $r>0$ such that $B_F(0,r)\subset C$. Fix $0<\alpha\le r$ and a unit vector $v\in F$, and set $\pi_\alpha(t)=(\alpha t/T)v$. Then $\pi_\alpha(t)\in C$ for all $t$. The history $\sigma=\pi_\alpha$ satisfies $\sigma(0)=P_C(\pi_\alpha(0))$ and $\dot\pi_\alpha-\dot\sigma=0\in N_C(\sigma)$ a.e.; by uniqueness in Theorem~\ref{thm:bv-well-posedness}, $\mathcal P(\pi_\alpha)=\pi_\alpha$. For an arbitrary encoder $f_N:W^{1,1}([0,T];F)\to\mathcal Z_N$, the decoded history $g_N^\star(f_N(\pi_\alpha))$ belongs to $\mathcal Z_N(C)\subset\mathcal Z_N$. Lemma~\ref{lem:quant} therefore implies
\begin{equation}
    \|\mathcal P(\pi_\alpha)-g_N^\star(f_N(\pi_\alpha))\|_{L^\infty}
    =
    \|\pi_\alpha-g_N^\star(f_N(\pi_\alpha))\|_{L^\infty}
    \ge
    \frac{\alpha}{2N}.
\end{equation}
Since $\|\pi_\alpha\|_{BV,0}=\alpha$, $E(f_N)\ge(2N)^{-1}$. Taking the infimum over $f_N$ proves $E_N\ge(2N)^{-1}$, and hence \eqref{eq:sharp-vector-constitutive}. The upper bound shows that $h_N^\star$ attains the value.
\end{proof}

\begin{remark}[Role of the interior assumption] {\rm 
The stress-aware upper bound $E_N\le(2N)^{-1}$ uses only closedness and convexity of $C$. The assumption $0\in\operatorname{int}C$ is used for the lower bound with the normalized $BV,0$ denominator. Without a nondegeneracy assumption of this kind, the statement can fail; for instance, if $C$ is a singleton, then every stress history is constant and $E_N=0$.
} \hfill$\square$
\end{remark}

\begin{remark}[What the input-first estimate gives]
\label{rem:old-BV-route}
{\rm If one insists that the encoder first approximate the input $\pi$ and then estimates the output only through the $BV$ stability of Proposition~\ref{prop:BV-play-stability}, one obtains the true but suboptimal bound
\begin{equation} \label{eq:old-BV-route}
    E(f_N)
    \le
    \sup_{\pi\in W^{1,1}([0,T];F)\setminus\{0\}}
    \frac{\|\pi-f_N(\pi)\|_{BV,0}}{\|\pi\|_{BV,0}}.
\end{equation}
Indeed, apply Proposition~\ref{prop:BV-play-stability} to the pair $(\pi,f_N(\pi))$. The constant encoder $f_N^0(\pi)\equiv\pi(0)$ gives $E(f_N^0)\le1$, but this does not recover the sharp $1/(2N)$ rate. Nor does the sharp $L^\infty$ input encoder help through this route: for the scalar ramp $\pi_\alpha(t)=\alpha t/T$, the midpoint equal-variation encoder satisfies
\begin{equation}
    \|\pi_\alpha-f_N^\star(\pi_\alpha)\|_{L^\infty}=\frac{\alpha}{2N},
\end{equation}
but
\begin{equation}
    \Var(\pi_\alpha-f_N^\star(\pi_\alpha);[0,T])
    =2\alpha-\frac{\alpha}{2N}.
\end{equation}
Thus the $BV$ error remains of order one relative to $\|\pi_\alpha\|_{BV,0}=\alpha$. This is the precise sense in which the input-first $BV$-stability route is stuck with suboptimal estimates. The sharp theorem above bypasses this obstruction by encoding the stress history, not by proving an unavailable vector $L^\infty$ non-expansiveness property.
} \hfill$\square$
\end{remark}

\begin{remark}[Continuity of optimal partitions] 
{\rm The equal-variation encoders $f_N^\star$ and $h_N^\star$ are generally discontinuous as mappings of the input in either the $L^\infty$ or the $BV$ topology, because small perturbations may alter the equal-variation partition abruptly. This lack of continuity reflects the fact that the optimal partition is itself part of the code. By contrast, the completed-graph constitutive map $\mathcal P_{BV}$ remains robust in the $BV$-to-$L^\infty$ sense of Proposition~\ref{prop:BV-play-stability}.
} \hfill$\square$
\end{remark}

\subsection{Constitutive consequences in the scalar case}

The vector theorem above settles the stress-valued minimax problem when stress-aware encoders are admissible. The scalar case records a different and useful fact: for the complementary variable, a purely input-based encoder is already sharp. The reason is that in one space dimension the complementary-variable map is $L^\infty$-nonexpansive with respect to the driving history.

Assume that $\dim F = 1$. We identify $F$ with $\mathbb{R}$ and write $C=[\alpha,\beta]$, $\alpha<\beta$. For $\pi\in W^{1,1}(0,T)$, define the scalar complementary-variable operator
\begin{equation}
W(\pi):=\pi-\mathcal P(\pi).
\end{equation}
More generally, for $z\in BV(0,T)$, define
\begin{equation}
W_{BV}(z):=z-\mathcal P_{BV}(z).
\end{equation}
In particular, if $z\in \mathcal{Z}_N$, then by $BV$-compatibility,
\begin{equation}
W_{BV}(z)=z-\mathcal P_{BV}(z)=z-g_N^\star(z),
\end{equation}
by the step-input compatibility stated above. Thus, given an encoder $f_N:W^{1,1}(0,T) \to \mathcal{Z}_N$, the decoded approximation of the complementary variable is
\begin{equation}
W_N(\pi):=W_{BV}(f_N(\pi))
=f_N(\pi)-g_N^\star(f_N(\pi)).
\end{equation}
We measure the constitutive error in the complementary variable by
\begin{equation}
{D}(f_N):=
\sup_{\pi\in W^{1,1}(0,T)\setminus\{0\}}
\frac{\|W(\pi)-W_N(\pi)\|_{L^\infty(0,T)}}{\|\pi\|_{BV,0}},
\end{equation}
and define
\begin{equation}
{D}_N:=
\inf_{f_N:W^{1,1}(0,T)\to \mathcal{Z}_N} {D}(f_N).
\end{equation}

\begin{lemma}[$L^\infty$-nonexpansiveness of the scalar complementary variable]
\label{lem:scalar-W-nonexpansive-BV}
For every $\pi\in W^{1,1}(0,T)$ and every $z\in BV(0,T)$,
\begin{equation} \label{eq:nonexp-scalar}
\|W(\pi)-W_{BV}(z)\|_{L^\infty(0,T)}
\le
\|\pi-z\|_{L^\infty(0,T)}.
\end{equation}
\end{lemma}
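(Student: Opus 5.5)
The plan is to reduce the claim to the discrete maximum-norm estimate \eqref{eq:maximum-norm-estimate} of Remark~\ref{rem:scalar-case}, and then remove the step-input restriction by a limiting argument in $L^\infty$. The discrete estimate is already $L^\infty$-nonexpansive, and the play operator is continuous along suitable approximating sequences. So the only delicate point is choosing the approximations so that \emph{both} the $L^\infty$ distance to the input and the convergence of the outputs are controlled.

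\emph{Step 1: step inputs.} For two step inputs $z^1,z^2\in BV(0,T)$, pass to a common refinement; Lemma~\ref{lem:refinement} says this does not change the outputs. The median recursion then gives
\begin{equation*}
\|W_{BV}(z^1)-W_{BV}(z^2)\|_{L^\infty}\le\|z^1-z^2\|_{L^\infty},
\end{equation*}
as in Remark~\ref{rem:scalar-case}. Here $W_{BV}(z)=z-\mathcal P_{BV}(z)$ coincides with the discrete $w$-recursion on step inputs.

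\emph{Step 2: approximation of $\pi$.} Let $\pi\in W^{1,1}$ and use Lemma~\ref{lem:step-approx} to obtain step functions $\pi_n$. These satisfy $\|\pi-\pi_n\|_{L^\infty}\le\Var(\pi)/n\to0$ and $\Var(\pi_n)\le\Var(\pi)$. I need $W_{BV}(\pi_n)\to W(\pi)$ uniformly. Because the $BV$ error $\Var(\pi-\pi_n)$ does not tend to zero (Remark~\ref{rem:old-BV-route}), Proposition~\ref{prop:BV-play-stability} cannot be applied directly. Instead I would use the scalar structure itself: in dimension one the play operator is Lipschitz in $L^\infty$ on the class of inputs obtained by the standard catching-up construction. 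Concretely, I would take piecewise linear interpolants $\tilde\pi_n$ of $\pi$ on the same nodes. These converge to $\pi$ in $W^{1,1}$, since $\pi$ is absolutely continuous, so Theorem~\ref{thm:regularity}(iii) gives $\mathcal P(\tilde\pi_n)\to\mathcal P(\pi)$ uniformly. I would then compare $\tilde\pi_n$ with the step input $\pi_n$ using the monotone/median structure, namely a scalar comparison principle for the $W^{1,1}$ sweeping inclusion $\dot w\in\partial I_C^\ast$-type dynamics. The more economical route is to prove \eqref{eq:nonexp-scalar} first for two $W^{1,1}$ inputs directly. Set $w^i=W(\pi^i)$; these satisfy $\pi^i-w^i\in C$ and $\dot w^i\,(\pi^i-w^i-z)\ge0$ for $z\in C$. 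At a.e.\ time where $d(t):=w^1-w^2>\|\pi^1-\pi^2\|_\infty=:\delta$ and $\dot d>0$, one has $\sigma^1=\pi^1-w^1<\pi^2-w^2=\sigma^2$ with $\sigma^i\in[\alpha,\beta]$. Therefore $\sigma^1<\beta$ and $\sigma^2>\alpha$, which forces $\dot w^1\le0$ and $\dot w^2\ge0$, a contradiction. A standard argument on the absolutely continuous function $(d-\delta)^+$ then shows that it stays at zero, because it starts at zero: the initial values $w^i(0)=\operatorname{med}$ satisfy the bound.

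\emph{Step 3: passage to $z\in BV$.} For a general right-continuous $z$, I would use that $\mathcal P_{BV}$ is the limit of step (catching-up) approximations $z_n$ with $z_n\to z$ uniformly on the completed graph. Along these, $W_{BV}(z_n)\to W_{BV}(z)$, which is the construction cited in Proposition~\ref{prop:BV-play-stability}. Combining Step 1 for step–step pairs with Step 2, which approximates $W(\pi)$ by $W_{BV}(\pi_n)$, and passing to the limit yields \eqref{eq:nonexp-scalar}.

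I expect the main obstacle to be exactly this limiting step: justifying uniform convergence of $W_{BV}(\pi_n)\to W(\pi)$ when the step approximants converge only in $L^\infty$ and not in $BV$. If the comparison argument of Step 2 is awkward, I would instead extend the median-recursion estimate to mixed pairs through Lemma~\ref{lem:refinement}-type refinement. Sampling $\pi$ on ever finer partitions gives the resolvent discretization, which converges to $\mathcal P(\pi)$ for $W^{1,1}$ inputs by the cited sweeping-process theory.
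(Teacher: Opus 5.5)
Your plan is essentially the paper's proof. It uses the step--step median estimate of Remark~\ref{rem:scalar-case} on a common refinement, then passes to uniform limits of step approximations, taking the uniform convergence of the outputs (sup-norm continuity of the scalar play, equivalently convergence of catching-up for $W^{1,1}$ inputs) from the classical scalar play theory, exactly as the paper does by citation. Your direct comparison argument for two $W^{1,1}$ inputs is correct but does not close the proof. A step $z$ with jumps is not a uniform limit of continuous inputs, so that estimate alone does not reach the mixed pair $(\pi,z)$ needed in Proposition~\ref{prop:constitutive-scalar} unless you add, e.g., a ramp approximation of the jumps with pointwise convergence of the outputs.
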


\begin{proof}
We prove the slightly stronger statement
\begin{equation}
\label{eq:W-BV-nonexpansive}
\|W_{BV}(z^1)-W_{BV}(z^2)\|_{L^\infty(0,T)}
\le
\|z^1-z^2\|_{L^\infty(0,T)}
\quad
\forall\,z^1,z^2\in BV(0,T)\text{ right-continuous}.
\end{equation}
If $z^1$ and $z^2$ are step inputs, then \eqref{eq:W-BV-nonexpansive} is exactly the scalar estimate in Remark~\ref{rem:scalar-case}, after passing to a common refinement of their partitions. For general right-continuous $BV$ inputs, choose right-continuous step functions $z_m^i$ with $z_m^i\to z^i$ uniformly on $[0,T]$, $i=1,2$. Such approximations exist because $BV$ functions are regulated. The scalar play operator is the uniform closure of the step-input catching-up operators, and the estimate for step inputs makes this closure $1$-Lipschitz in the uniform norm; equivalently, $W_{BV}(z_m^i)\to W_{BV}(z^i)$ uniformly. Passing to the limit in the step estimate gives \eqref{eq:W-BV-nonexpansive}; see \cite{Krejci1996, BrokateSprekels1996}. Finally, apply \eqref{eq:W-BV-nonexpansive} with $z^1=\pi$ and $z^2=z$, using $W_{BV}(\pi)=W(\pi)$ for $\pi\in W^{1,1}(0,T)$.
\end{proof}

\begin{proposition}[Constitutive error controlled by $L^\infty$ input error]
\label{prop:constitutive-scalar}
For every encoder $f_N:W^{1,1}(0,T)\to \mathcal{Z}_N$,
\begin{equation}
{D}(f_N)\le A(f_N).
\end{equation}
In particular, for the sampled equal-variation encoder $f_N^\star$,
\begin{equation}
{D}(f_N^\star)\le \frac{1}{2N},
\end{equation}
and therefore
\begin{equation}
\label{eq:upper-bound-scalar}
{D}_N\le \frac{1}{2N}.
\end{equation}
\end{proposition}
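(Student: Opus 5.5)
The plan is to reduce everything to Lemma~\ref{lem:scalar-W-nonexpansive-BV}, applied pointwise in the encoder. Fix an arbitrary encoder $f_N:W^{1,1}(0,T)\to\mathcal Z_N$ and a nonzero $\pi\in W^{1,1}(0,T)$. By definition, $W_N(\pi)=W_{BV}(f_N(\pi))$. The code $z:=f_N(\pi)$ is a right-continuous step function, hence belongs to $BV(0,T)$. Lemma~\ref{lem:scalar-W-nonexpansive-BV} with this $z$ then gives
\begin{equation*}
\|W(\pi)-W_N(\pi)\|_{L^\infty(0,T)}\le \|\pi-f_N(\pi)\|_{L^\infty(0,T)}.
\end{equation*}
Dividing by $\|\pi\|_{BV,0}>0$ and taking the supremum over $\pi$ on both sides yields ${D}(f_N)\le A(f_N)$ directly from the definitions of ${D}$ and $A$.

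For the second assertion, I specialize to $f_N=f_N^\star$, the sampled equal-variation encoder. Theorem~\ref{thm:sharp-cont} (equivalently Lemma~\ref{lem:upper-cont} combined with $\Var(\pi;[0,T])\le\|\pi\|_{BV,0}$) gives $A(f_N^\star)\le (2N)^{-1}$. Hence ${D}(f_N^\star)\le(2N)^{-1}$. Since ${D}_N$ is an infimum over all encoders, ${D}_N\le {D}(f_N^\star)\le (2N)^{-1}$, which is \eqref{eq:upper-bound-scalar}. The construction of $f_N^\star$ is purely input-side and does not depend on $C$, so no nondegeneracy assumption on $C$ is needed here.

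I do not expect a real obstacle, since all the analytic content sits in Lemma~\ref{lem:scalar-W-nonexpansive-BV}. The one point to check is compatibility. Lemma~\ref{lem:scalar-W-nonexpansive-BV} compares $W(\pi)=\pi-\mathcal P(\pi)$ with $W_{BV}(z)=z-\mathcal P_{BV}(z)$. We need $W_{BV}(f_N(\pi))=f_N(\pi)-g_N^\star(f_N(\pi))$, which holds because $g_N^\star=\mathcal P_{BV}$ on $\mathcal Z_N$ by \eqref{eq:discrete-decoder}. We also need $W_{BV}(\pi)=W(\pi)$, which holds because $\mathcal P_{BV}=\mathcal P$ on $W^{1,1}$; the proof of the lemma already uses this. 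Finally, the $L^\infty$ norms are suprema over $[0,T]$ of right-continuous representatives, so the endpoint convention $f_N(\pi)(T)=z_M$ causes no issue.
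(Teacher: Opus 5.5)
Your proposal is correct and follows the paper's proof essentially verbatim. You apply Lemma~\ref{lem:scalar-W-nonexpansive-BV} with $z=f_N(\pi)$, divide by $\|\pi\|_{BV,0}$ and take suprema to get ${D}(f_N)\le A(f_N)$, then specialize to $f_N^\star$ via Theorem~\ref{thm:sharp-cont} (only its upper-bound half is needed) and take the infimum over encoders.
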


\begin{proof}
Fix $\pi\in W^{1,1}(0,T)$ and set $z:=f_N(\pi)\in \mathcal{Z}_N$. By
Lemma~\ref{lem:scalar-W-nonexpansive-BV},
\begin{equation}
\begin{split}
\|W(\pi)-W_N(\pi)\|_{L^\infty(0,T)}
& =
\|W(\pi)-W_{BV}(z)\|_{L^\infty(0,T)}
\\ & \le
\|\pi-z\|_{L^\infty(0,T)}
=
\|\pi-f_N(\pi)\|_{L^\infty(0,T)}.
\end{split}
\end{equation}
Dividing by $\|\pi\|_{BV,0}$ and taking the supremum over $\pi\neq 0$ gives
\begin{equation}
{D}(f_N)\le
\sup_{\pi\in W^{1,1}(0,T)\setminus\{0\}}
\frac{\|\pi-f_N(\pi)\|_{L^\infty(0,T)}}{\|\pi\|_{BV,0}}
=
A(f_N).
\end{equation}
Applying this estimate to $f_N^\star$ and using Theorem~\ref{thm:sharp-cont} yields
\begin{equation}
{D}(f_N^\star)\le A(f_N^\star)=\frac{1}{2N}.
\end{equation}
Taking the infimum over all encoders proves (\ref{eq:upper-bound-scalar}).
\end{proof}

\begin{theorem}[Sharp scalar constitutive minimax theorem]
\label{thm:sharp-scalar-constitutive}
For every $N\in\mathbb{N}$,
\begin{equation}
\label{eq:error-scalar}
{D}_N=\frac{1}{2N}.
\end{equation}
Moreover, the value is attained by the sampled equal-variation encoder $f_N^\star$.
\end{theorem}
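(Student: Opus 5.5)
The plan is to combine the upper bound already proved in Proposition~\ref{prop:constitutive-scalar} with a matching lower bound obtained from a fully plastic ramp. Proposition~\ref{prop:constitutive-scalar} gives ${D}_N\le {D}(f_N^\star)\le (2N)^{-1}$. So everything reduces to showing ${D}(f_N)\ge (2N)^{-1}$ for an arbitrary encoder $f_N$. Once that holds, ${D}_N=(2N)^{-1}$, and $f_N^\star$ attains the value.

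\textbf{Test inputs.} Unlike the vector case, we cannot use an elastic ramp: the complementary variable vanishes identically on elastic histories. Instead I take a ramp that starts on the yield point and loads plastically,
\begin{equation*}
\pi_\lambda(t):=\beta+\frac{\lambda t}{T},\qquad \lambda>0 .
\end{equation*}
I first identify $\mathcal P(\pi_\lambda)$. The candidate is $\sigma\equiv\beta$, which satisfies the three requirements of Theorem~\ref{thm:bv-well-posedness}:
\begin{itemize}
\item $\sigma(0)=P_C(\beta)=\beta$;
\item $\sigma(t)\in C$ for all $t$;
\item $\dot\pi_\lambda-\dot\sigma=\lambda/T\ge 0$, which lies in $N_C(\beta)=[0,\infty)$.
\end{itemize}
By uniqueness, $\mathcal P(\pi_\lambda)\equiv\beta$, and therefore $W(\pi_\lambda)(t)=\lambda t/T$, a pure scalar ramp. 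The normalization is $\|\pi_\lambda\|_{BV,0}=|\beta|+\lambda$.

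\textbf{Structure of the decoded output.} Let $z:=f_N(\pi_\lambda)\in\mathcal Z_N$, with partition $\{t_0<\dots<t_M\}$ and $M\le N$. The decoded stress $g_N^\star(z)$ is produced by the recursion \eqref{eq:discrete-play} on that same partition. Hence it is right-continuous and constant on each $[t_{k-1},t_k)$. It follows that $W_N(\pi_\lambda)=z-g_N^\star(z)$ is also constant on each of these intervals, so $W_N(\pi_\lambda)\in\mathcal Z_N$.

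\textbf{Lower bound.} Lemma~\ref{lem:quant}, applied in $F=\mathbb R$ with $v=1$ and amplitude $\lambda$, gives
\begin{equation*}
\|W(\pi_\lambda)-W_N(\pi_\lambda)\|_{L^\infty(0,T)}\ge \frac{\lambda}{2N}.
\end{equation*}
Consequently
\begin{equation*}
{D}(f_N)\ge \frac{\lambda}{2N(|\beta|+\lambda)}\quad\text{for every }\lambda>0 .
\end{equation*}
Letting $\lambda\to\infty$ gives ${D}(f_N)\ge (2N)^{-1}$, and taking the infimum over $f_N$ gives ${D}_N\ge(2N)^{-1}$.

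The main point needing care is the choice of test input. No interior assumption on $C$ is available in the scalar statement, and $0$ need not lie in $[\alpha,\beta]$. This is why the lower bound is only approached asymptotically through $\lambda\to\infty$, which makes the offset $|\beta|$ negligible in the $BV,0$ norm. If $\beta=0$, the bound is attained already for every $\lambda$. The claim that $W_N(\pi_\lambda)$ has at most $N$ pieces is routine from the recursion.
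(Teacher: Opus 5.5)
Your proposal is correct and follows the paper's proof essentially step for step: you take the upper bound from Proposition~\ref{prop:constitutive-scalar}, use the plastic ramp $\pi_\lambda(t)=\beta+\lambda t/T$ with $\mathcal P(\pi_\lambda)\equiv\beta$, observe that $W_N(\pi_\lambda)\in\mathcal Z_N$, apply Lemma~\ref{lem:quant}, and let $\lambda\to\infty$ to absorb the $|\beta|$ offset. Your explicit check of $\mathcal P(\pi_\lambda)\equiv\beta$, via uniqueness in Theorem~\ref{thm:bv-well-posedness} with $N_C(\beta)=[0,\infty)$, usefully fills in a step the paper only asserts.
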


\begin{proof}
The upper bound (\ref{eq:upper-bound-scalar}) has already been proved in Proposition~\ref{prop:constitutive-scalar}. It remains to prove the reverse inequality. Let $f_N:W^{1,1}(0,T)\to \mathcal{Z}_N$ be any encoder, and for $\lambda>0$ consider the monotone loading
\begin{equation}
\pi_\lambda(t):=\beta+\frac{\lambda t}{T},
\quad t\in[0,T].
\end{equation}
Since $\pi_\lambda(0)=\beta\in C$ and $\pi_\lambda$ is nondecreasing, the scalar play operator remains stuck at the right endpoint,
\begin{equation}
\mathcal P(\pi_\lambda)(t)\equiv \beta
\quad \forall\, t\in[0,T].
\end{equation}
Therefore, the exact complementary output is the ramp
\begin{equation}
W(\pi_\lambda)(t)=\pi_\lambda(t)-\mathcal P(\pi_\lambda)(t)=\frac{\lambda t}{T}.
\end{equation}
Set $z_\lambda:=f_N(\pi_\lambda)\in \mathcal{Z}_N$. By definition,
\begin{equation}
W_N(\pi_\lambda)=W_{BV}(z_\lambda)=z_\lambda-g_N^\star(z_\lambda).
\end{equation}
Since both $z_\lambda$ and $g_N^\star(z_\lambda)$ are right-continuous step functions on the same partition, $W_N(\pi_\lambda)$ is itself a right-continuous step function with at most $N$ constant pieces, i.e.,
\begin{equation}
W_N(\pi_\lambda)\in \mathcal{Z}_N.
\end{equation}
Applying Lemma~\ref{lem:quant} to the ramp $t\mapsto \lambda t/T$ therefore gives
\begin{equation}
\|W(\pi_\lambda)-W_N(\pi_\lambda)\|_{L^\infty(0,T)}
\ge
\frac{\lambda}{2N}.
\end{equation}
In addition,
\begin{equation}
\|\pi_\lambda\|_{BV,0}
=
|\pi_\lambda(0)|+\Var(\pi_\lambda;[0,T])
=
|\beta|+\lambda.
\end{equation}
Hence,
\begin{equation}
{D}(f_N)
\ge
\frac{\|W(\pi_\lambda)-W_N(\pi_\lambda)\|_{L^\infty(0,T)}}{\|\pi_\lambda\|_{BV,0}}
\ge
\frac{\lambda}{2N(|\beta|+\lambda)}.
\end{equation}
This lower bound holds for every $\lambda>0$. Letting $\lambda\to\infty$ yields
\begin{equation}
{D}(f_N)\ge \frac{1}{2N}.
\end{equation}
Since $f_N$ is arbitrary,
\begin{equation}
{D}_N\ge \frac{1}{2N}.
\end{equation}
Combining this with Proposition~\ref{prop:constitutive-scalar} proves (\ref{eq:error-scalar}). The same proposition shows that $f_N^\star$ attains the upper bound, hence the minimum is achieved by $f_N^\star$.
\end{proof}

\section{Numerical illustrations} \label{sec:numerical}

We present numerical illustrations that distinguish two different uses of equal-variation sampling. The first is the input-first use, in which the prescribed driving history is compressed and then decoded through the material law. The second is the stress-aware use appearing in Theorem~\ref{thm:sharp-vector-constitutive}, in which the exact stress history is first computed, sampled by equal stress variation, and then passed through the same discrete hereditary decoder. The latter construction is a constitutive code rather than an input approximation: because the sampled stress history takes values in the elastic domain, Lemma~\ref{lem:decoder-fixed-points} implies that the decoder leaves it fixed.

All errors in the figures below are measured in the same stress-space norm used in the analysis. The smooth reference histories are evaluated on a fine background grid and the step encoders are then compared with the reference histories on that grid. The purpose of the computations is not to replace the minimax proof, but to make visible the geometric distinction between sampling the input path and sampling the stress path.

\subsection{Mises solid under proportional loading}

We consider an isotropic three-dimensional von Mises solid with
\begin{equation} \label{kWMoYt}
    C:=\{\sigma\in F:\ |\operatorname{dev}\sigma|\le \sqrt{2}\,\tau_0\},
\end{equation}
and restrict the evolution to the fixed deviatoric direction
\begin{equation}
    D:=\frac{1}{\sqrt{2}}\operatorname{diag}(1,-1,0), 
    \quad 
    \operatorname{tr}D=0, 
    \quad 
    |D|=1 .
\end{equation}
We seek histories of the form
\begin{equation}
    \pi(t)=r(t)D, \quad \sigma(t)=s(t)D,
\end{equation}
for scalar amplitudes $r(t)$ and $s(t)$. The yield condition then becomes
\begin{equation}
    |s(t)|\le s_{\mathrm Y}, \quad s_{\mathrm Y}:=\sqrt{2}\,\tau_0,
\end{equation}
and the constitutive law reduces to the scalar stop operator
\begin{equation} \label{eq:stop_update}
    s_0=\operatorname{proj}_{[-s_{\mathrm Y},s_{\mathrm Y}]}(r(0)),
    \quad
    s_k=\operatorname{proj}_{[-s_{\mathrm Y},s_{\mathrm Y}]}
    \bigl(s_{k-1}+r(t_k)-r(t_{k-1})\bigr),
    \quad k=1,\dots,M,
\end{equation}
on a time grid $0=t_0<t_1<\cdots<t_M=T$; see \cite{OrtizHerrera1981, Krejci1996, BrokateSprekels1996, MielkeRoubicek2015}.

For definiteness, we take $T=10$, $\tau_0=0.8$, and
\begin{equation} \label{eq:example-input}
    r(t)=
    1.6\,\sin\!\Bigl(\frac{4\pi}{T}\,\vartheta(t)\Bigr)
    +
    0.25\,\sin\!\Bigl(\frac{2\pi}{T}\,\vartheta(t)+0.8\Bigr),
    \quad
    \vartheta(t):=t+1.2\sin\!\Bigl(\frac{2\pi t}{T}\Bigr).
\end{equation}
This input is smooth, but its variation is strongly non-uniform in time owing to concentration near turning regions. Consequently, it is a natural candidate for comparing equal-variation sampling against equal-time sampling and, separately, for comparing input-first and stress-aware encodings.

We first compare two right-continuous input encoders:
\begin{enumerate}
    \item[(i)] the equal-variation input encoder $f_N^\star$;
    \item[(ii)] the equal-time input encoder $\widetilde f_N$ defined by the uniform partition $t_k=kT/N$, the right-continuous samples $\widetilde f_N(\pi)(t)=\pi(t_{k-1})$ on $[t_{k-1},t_k)$, and $\widetilde f_N(\pi)(T)=\pi(T)$.
\end{enumerate}
The corresponding input-first stress histories are computed by the discrete decoder,
\begin{equation}
    \sigma_N^\star:=g_N^\star(f_N^\star(\pi)),
    \quad
    \widetilde{\sigma}_N:=g_N^\star(\widetilde f_N(\pi)).
\end{equation}
We consider both the input approximation errors
\begin{equation} \label{eq:input-error}
    E_N^{\mathrm{in}}
    :=
    \|\pi-f_N^\star(\pi)\|_{L^\infty(0,T;F)},
    \quad
    \widetilde E_N^{\mathrm{in}}
    :=
    \|\pi-\widetilde f_N(\pi)\|_{L^\infty(0,T;F)},
\end{equation}
and the input-first stress errors
\begin{equation} \label{eq:stress-error}
    E_N^{\mathrm{out}}
    :=
    \|\sigma-\sigma_N^\star\|_{L^\infty(0,T;F)},
    \quad
    \widetilde E_N^{\mathrm{out}}
    :=
    \|\sigma-\widetilde \sigma_N\|_{L^\infty(0,T;F)}.
\end{equation}
Since all histories are supported on the single mode $D$, these are simply the corresponding scalar $L^\infty$-errors in the scalar amplitudes $r(t)$ and $s(t)$.

\begin{figure}[htbp]
  \centering
  \safeincludegraphics[width=0.86\linewidth]{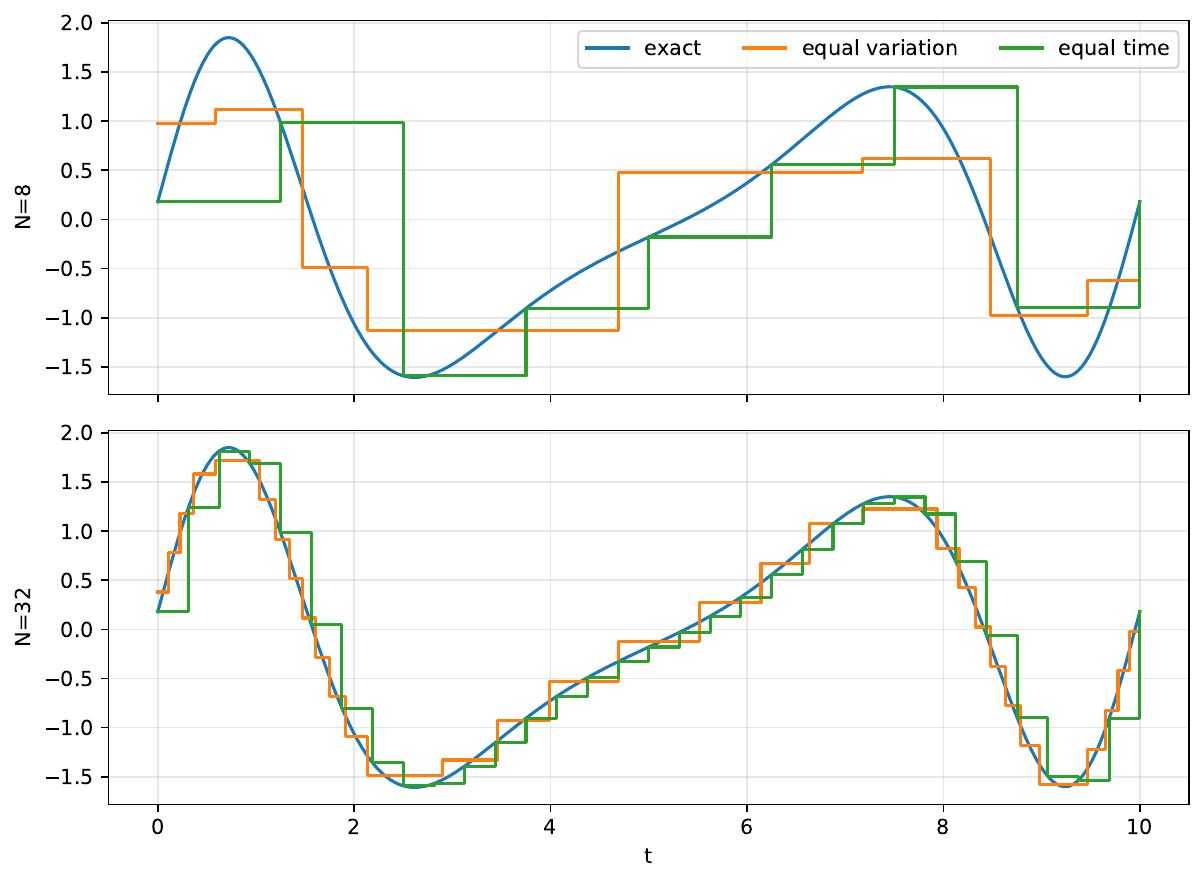}
  \caption{Input amplitude $r(t)$ together with equal-variation and equal-time input encoders for representative values of $N$.} \label{fig:opt-input-pairs}
\end{figure}

\begin{figure}[htbp]
  \centering
  \safeincludegraphics[width=0.86\linewidth]{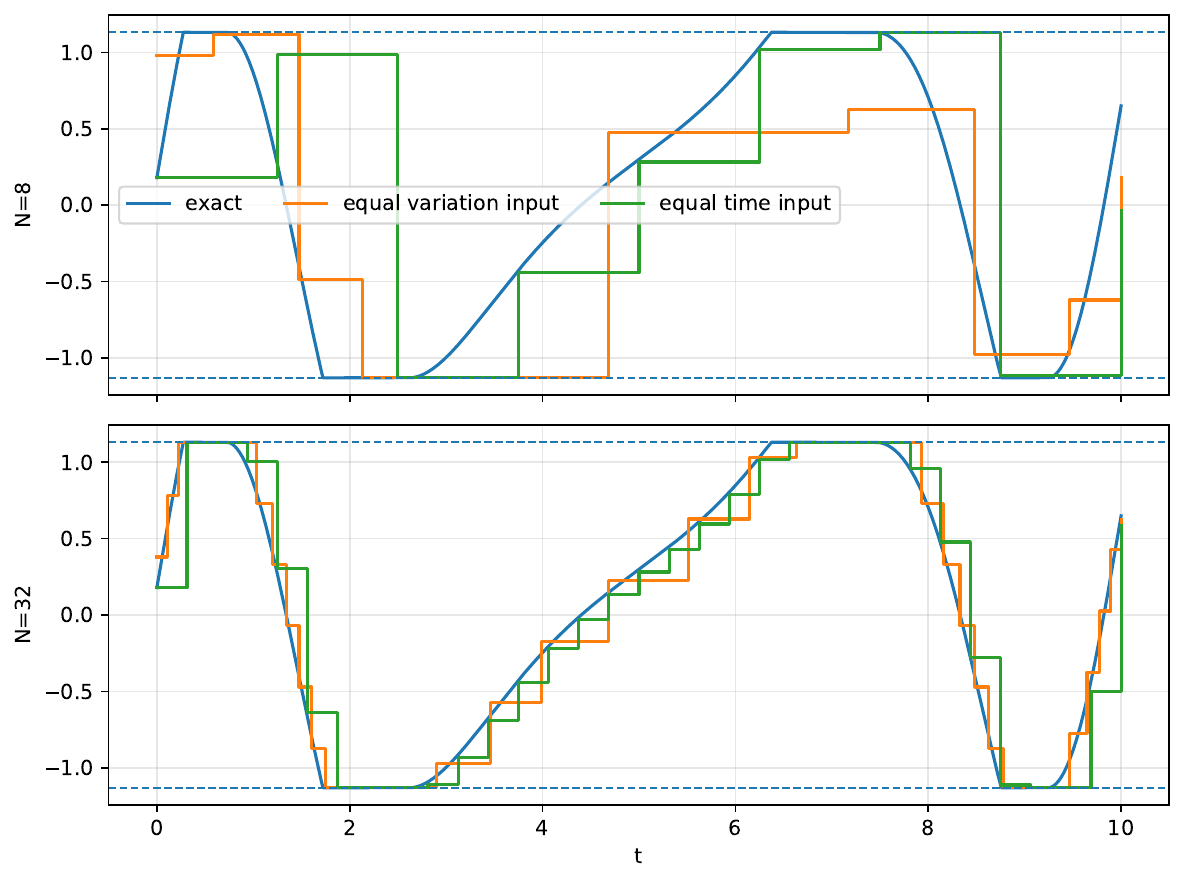}
  \caption{Input-first stress outputs. The dashed lines indicate the admissible bounds $\pm s_{\mathrm Y}$.} \label{fig:opt-stress-pairs}
\end{figure}

The equal-variation input encoder distributes its $N$ values according to the cumulative variation of the input, rather than according to elapsed time. For the input signal \eqref{eq:example-input}, where much of the variation is concentrated near a small number of turning regions, it spends more of its degrees of freedom where the input changes most. By contrast, the equal-time encoder allocates the same number of intervals to slowly varying and rapidly varying parts of the input signal. Figures~\ref{fig:opt-input-pairs} and \ref{fig:opt-stress-pairs} show this mechanism at the input level and after input-first decoding through the scalar stop law.

\begin{figure}[htbp]
  \centering
  \safeincludegraphics[width=0.78\linewidth]{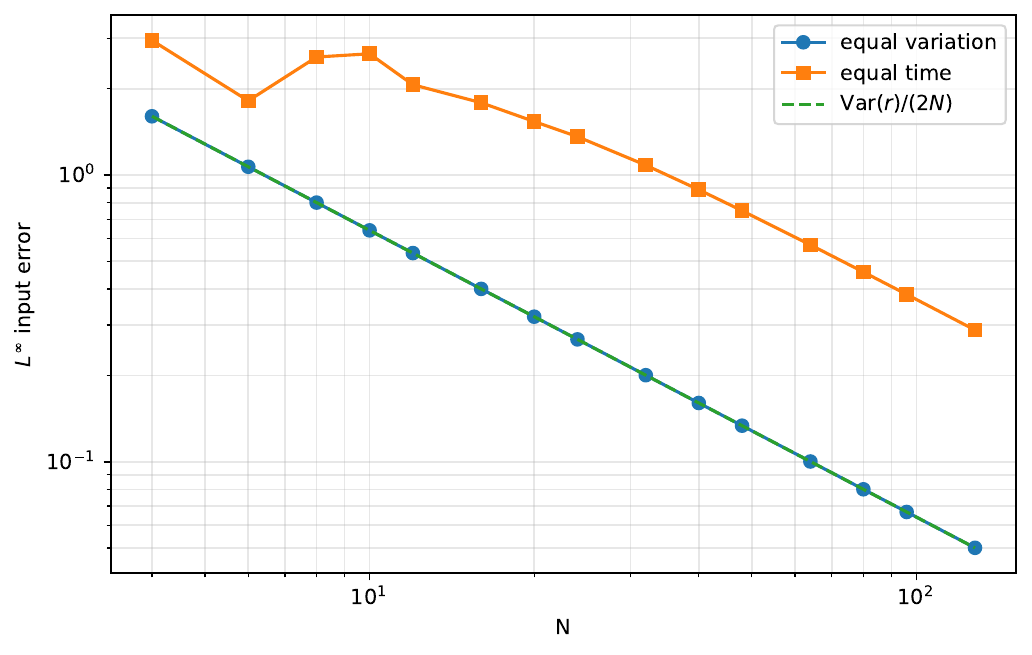}
  \caption{Input approximation error versus $N$.}
  \label{fig:opt-input-error}
\end{figure}

\begin{figure}[htbp]
  \centering
  \safeincludegraphics[width=0.78\linewidth]{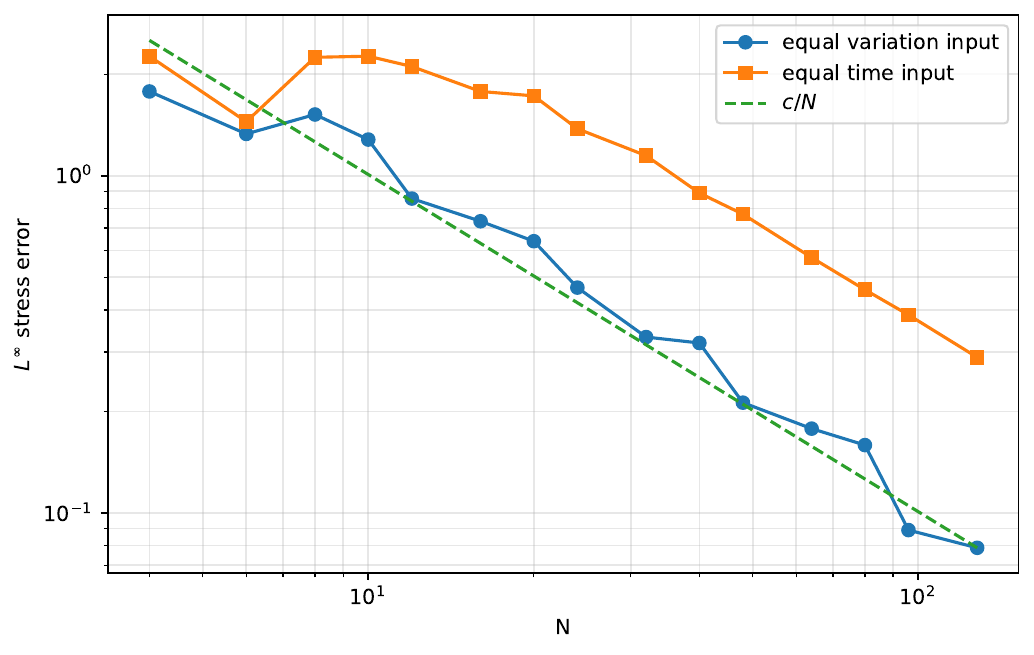}
  \caption{Input-first stress approximation error versus $N$. The dashed line is a $c/N$ reference slope.}
  \label{fig:opt-stress-error}
\end{figure}

Figures~\ref{fig:opt-input-error} and \ref{fig:opt-stress-error} show the input and input-first output errors as a function of the rank of the encoder. The input error follows the $1/N$ scaling predicted by Theorem~\ref{thm:sharp-cont}. The stress output error also exhibits an empirical $1/N$ trend over the displayed range, with the equal-variation input encoder outperforming the equal-time input encoder in this one-dimensional example. These curves, however, are still generated by input-first encoders.

We now display the stress-aware construction from Theorem~\ref{thm:sharp-vector-constitutive}. Let $\sigma=\mathcal P(\pi)$ and define
\begin{equation} \label{eq:prop-stress-aware-code}
    h_N^\star(\pi):={S}_N^\star(\sigma),
    \qquad
    \sigma_N^{\mathrm{sa}}
    :=g_N^\star(h_N^\star(\pi)).
\end{equation}
Because $h_N^\star(\pi)\in\mathcal Z_N(C)$, Lemma~\ref{lem:decoder-fixed-points} gives
\begin{equation}
    \sigma_N^{\mathrm{sa}}=h_N^\star(\pi)={S}_N^\star(\sigma).
\end{equation}
Thus the stress-aware approximation is obtained by sampling the stress path itself by equal increments of stress variation. Its error is
\begin{equation} \label{eq:prop-stress-aware-error}
    E_N^{\mathrm{sa}}
    :=\|\sigma-\sigma_N^{\mathrm{sa}}\|_{L^\infty(0,T;F)}
    \le
    \frac{1}{2N}\,\Var(\sigma;[0,T]),
\end{equation}
which is the computable version of the upper bound used in Theorem~\ref{thm:sharp-vector-constitutive} for this particular history.

\begin{figure}[htbp]
  \centering
  \safeincludegraphics[width=0.86\linewidth]{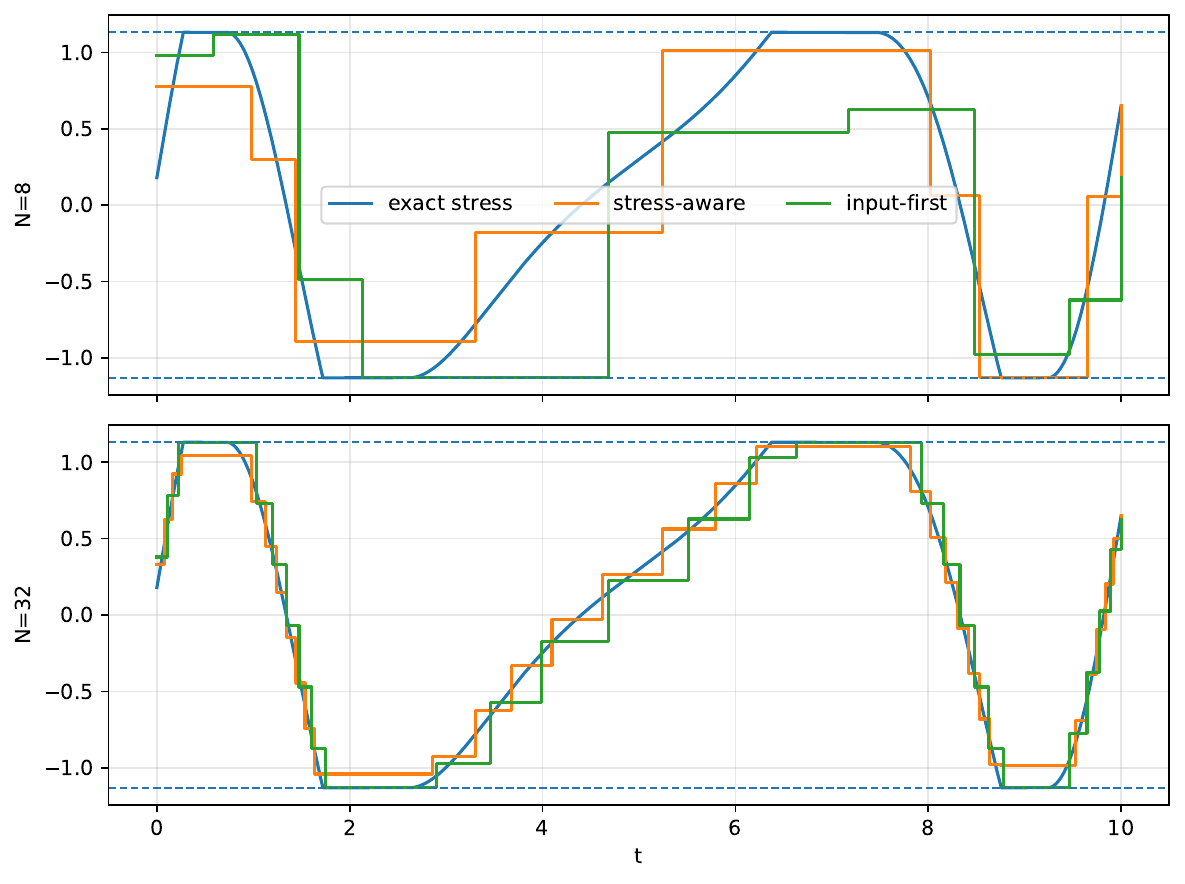}
  \caption{Proportional loading: exact stress, input-first decoded stress, and stress-aware decoded stress for representative values of $N$. The stress-aware decoder leaves the encoded stress samples fixed because they lie in the interval $[-s_{\mathrm Y},s_{\mathrm Y}]$.}
  \label{fig:prop-stressaware-pairs}
\end{figure}

\begin{figure}[htbp]
  \centering
  \safeincludegraphics[width=0.78\linewidth]{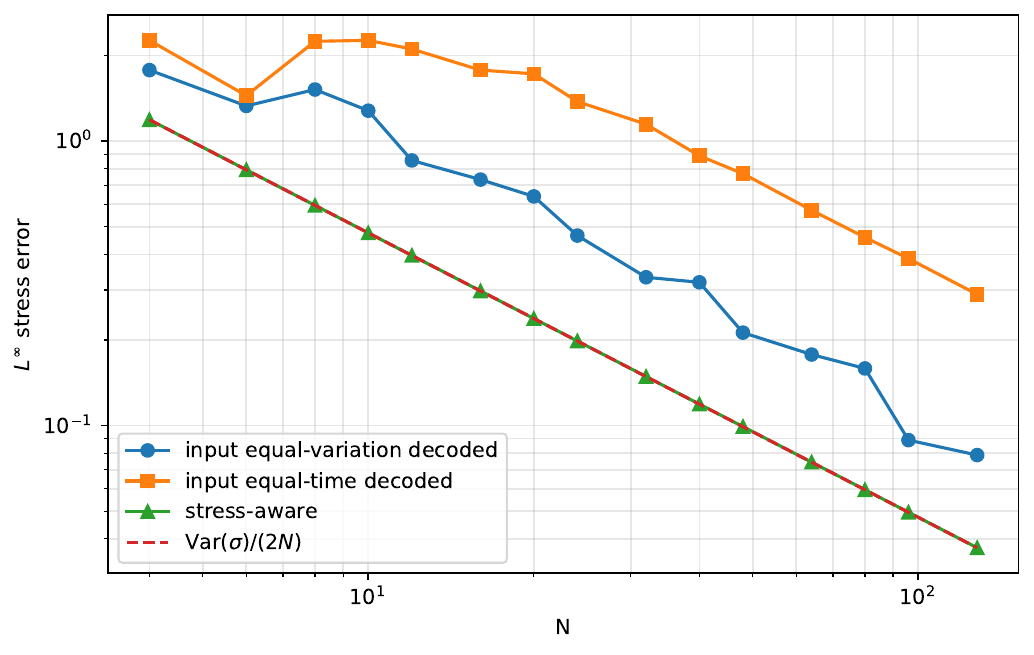}
  \caption{Proportional loading: comparison of input-first stress errors and stress-aware stress error. The dashed curve is the bound $\Var(\sigma;[0,T])/(2N)$.}
  \label{fig:prop-stressaware-error}
\end{figure}

Figure~\ref{fig:prop-stressaware-pairs} shows the new stress-aware approximation together with the input-first equal-variation stress output. The stress-aware jumps are placed according to the variation of $s(t)$, not according to the variation of $r(t)$. In particular, elastic intervals in which the input changes but the stress is nearly saturated are treated differently by the two codes. Figure~\ref{fig:prop-stressaware-error} then compares the resulting stress errors. The stress-aware curve follows the sharp stress-variation envelope \eqref{eq:prop-stress-aware-error}, while the input-first curves reflect the additional effect of passing an input surrogate through the hysteretic decoder.

In summary, the proportional example has two complementary messages. Equal-variation sampling of the input is the optimal input code and is useful when only the loading history is to be compressed. Equal-variation sampling of the stress is the optimal constitutive code: once the stress path has been computed, the decoder is a fixed point on the stress samples and the sharp stress-variation estimate is attained at the level of the displayed error envelope.

\subsection{Mises solid under nonproportional loading}
\label{sec:mises-nonproportional}

We now consider a genuinely vector-valued loading path for the same isotropic three-dimensional von Mises solid \eqref{kWMoYt}, with the same value $\tau_0=0.8$ and final time $T=10$ used in the preceding proportional-loading example. Let
\begin{equation}
    D_1:=\frac{1}{\sqrt{2}}\operatorname{diag}(1,-1,0),
    \quad
    D_2:=\frac{1}{\sqrt{6}}\operatorname{diag}(1,1,-2),
\end{equation}
so that $D_1:D_2=0$ and $|D_1|=|D_2|=1$. We seek histories of the form
\begin{equation}
    \pi(t)=p_1(t)D_1+p_2(t)D_2,
    \quad
    \sigma(t)=q_1(t)D_1+q_2(t)D_2,
\end{equation}
with
\begin{equation}
    p(t):=(p_1(t),p_2(t))=\rho(t)(\cos\phi(t),\sin\phi(t)),
\end{equation}
where
\begin{align}
    \vartheta(t)&:=t+1.1\sin\!\Bigl(\frac{2\pi t}{T}\Bigr), \\
    \rho(t)&:=1.45
    +0.25\sin\!\Bigl(\frac{2\pi}{T}\,\vartheta(t)+0.3\Bigr)
    +0.08\sin\!\Bigl(\frac{6\pi t}{T}\Bigr), \\
    \phi(t)&:=0.4
    +\frac{2.7\pi}{T}\,\vartheta(t)
    +0.4\sin\!\Bigl(\frac{2\pi t}{T}+0.2\Bigr).
\end{align}
Because $\phi(t)$ is not constant, the loading direction changes continuously in the $(D_1,D_2)$-plane and the evolution is therefore nonproportional.

Restricting the yield condition to $\operatorname{span}\{D_1,D_2\}$ gives the disk
\begin{equation}
    B_{s_{\mathrm Y}}
    :=
    \{q\in\mathbb R^2:\ |q|\le s_{\mathrm Y}\},
    \quad
    s_{\mathrm Y}:=\sqrt{2}\,\tau_0,
\end{equation}
and the discrete constitutive law becomes the two-dimensional closest-point recursion
\begin{equation}
    q_0=\operatorname{proj}_{B_{s_{\mathrm Y}}}(p(0)),
    \quad
    q_k=\operatorname{proj}_{B_{s_{\mathrm Y}}}
    \bigl(q_{k-1}+p(t_k)-p(t_{k-1})\bigr),
    \quad k=1,\dots,M.
    \label{eq:nonprop-update}
\end{equation}
Thus, unlike the proportional example, the problem no longer reduces to a scalar stop operator. The output now depends on changes of direction as well as on changes of amplitude.

As before, we compare the input equal-variation encoder $f_N^\star$ against the equal-time input encoder $\widetilde f_N$ defined on the uniform partition $t_k=kT/N$. The decoded input-first stress histories are
\begin{equation}
    \sigma_N^\star:=g_N^\star(f_N^\star(\pi)),
    \quad
    \widetilde\sigma_N:=g_N^\star(\widetilde f_N(\pi)).
\end{equation}
We measure the input and output errors by
\begin{equation}
    E_N^{\mathrm{in}}:=\|\pi-f_N^\star(\pi)\|_{L^\infty(0,T;F)},
    \quad
    \widetilde E_N^{\mathrm{in}}:=\|\pi-\widetilde f_N(\pi)\|_{L^\infty(0,T;F)},
\end{equation}
\begin{equation}
    E_N^{\mathrm{out}}:=\|\sigma-\sigma_N^\star\|_{L^\infty(0,T;F)},
    \quad
    \widetilde E_N^{\mathrm{out}}:=\|\sigma-\widetilde\sigma_N\|_{L^\infty(0,T;F)},
\end{equation}
and, in order to highlight the genuinely vector effect,
\begin{equation}
    R_N:=\frac{E_N^{\mathrm{out}}}{E_N^{\mathrm{in}}},
    \quad
    \widetilde R_N:=\frac{\widetilde E_N^{\mathrm{out}}}{\widetilde E_N^{\mathrm{in}}}.
\end{equation}
Because $D_1$ and $D_2$ are orthonormal, these are exactly the corresponding Euclidean errors in the coefficient vectors $p(t)$ and $q(t)$.

\begin{figure}[htbp]
  \centering
  \safeincludegraphics[width=0.94\linewidth]{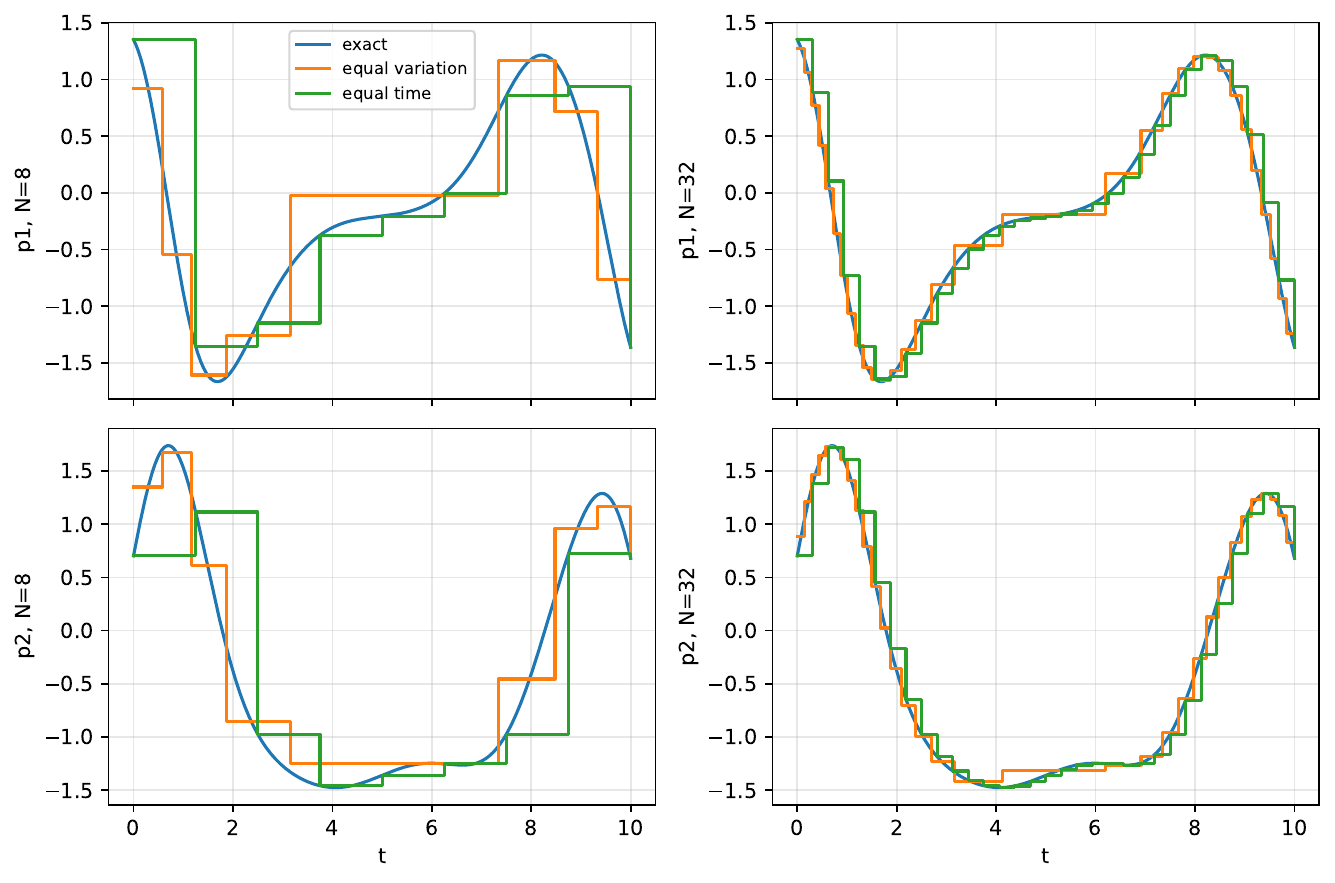}
  \caption{Input components $p_1(t)$ and $p_2(t)$ together with the equal-variation and equal-time input encoders for representative values of $N$.}
  \label{fig:nonprop-input-components}
\end{figure}

\begin{figure}[htbp]
  \centering
  \safeincludegraphics[width=0.86\linewidth]{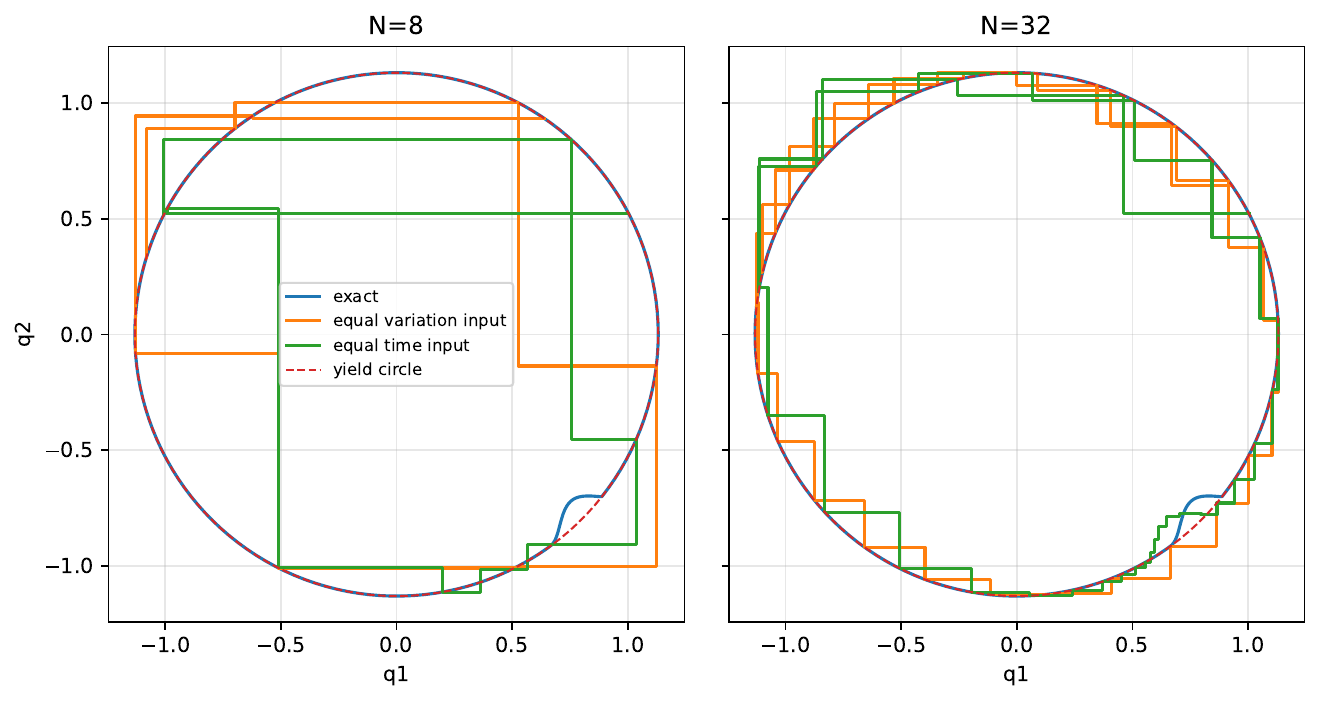}
  \caption{Input-first stress paths in the $(D_1,D_2)$-plane. The dashed curve is the yield circle $\partial B_{s_{\mathrm Y}}$.}
  \label{fig:nonprop-stress-paths}
\end{figure}

\begin{figure}[htbp]
  \centering
  \safeincludegraphics[width=0.94\linewidth]{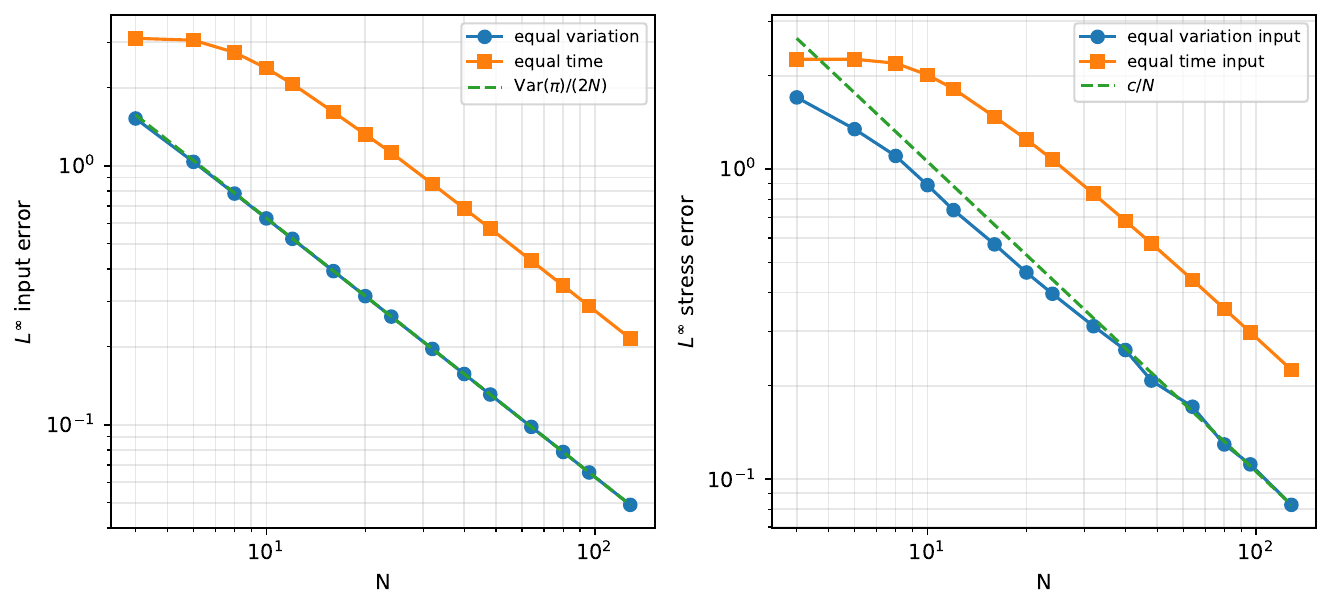}
  \caption{Input and input-first stress approximation errors versus $N$. The dashed line in the right panel is a $c/N$ reference slope.}
  \label{fig:nonprop-errors}
\end{figure}

\begin{figure}[htbp]
  \centering
  \safeincludegraphics[width=0.68\linewidth]{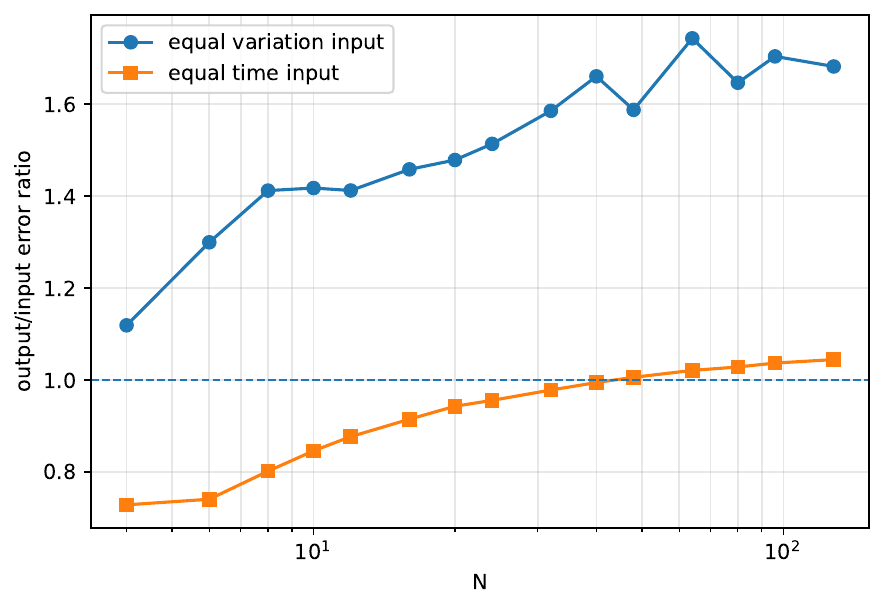}
  \caption{Ratio of input-first output error to input error. Values above $1$ indicate amplification of the $L^\infty$ input perturbation by the vector constitutive map.}
  \label{fig:nonprop-amplification}
\end{figure}

Figure~\ref{fig:nonprop-input-components} shows that the equal-variation input encoder concentrates its degrees of freedom near portions of the history where the combined radial and angular variation is largest. Figure~\ref{fig:nonprop-stress-paths} shows that the exact stress spends long intervals on the yield circle and moves tangentially along it as the loading direction rotates. This tangential sliding is absent in the proportional example and is a direct manifestation of nonproportionality. Figure~\ref{fig:nonprop-errors} shows that, for this example, the equal-variation input encoder continues to outperform the equal-time input encoder both at the input and at the input-first output level. The input-first vector effect is clearest in Figure~\ref{fig:nonprop-amplification}: the ratio $R_N$ can exceed $1$, so an $L^\infty$ input error cannot be pushed through the vector decoder by a non-expansive estimate.

We next repeat the same demonstration with the stress-aware code. Let $\sigma=\mathcal P(\pi)$ and set
\begin{equation} \label{eq:nonprop-stress-aware-code}
    h_N^\star(\pi):={S}_N^\star(\sigma),
    \qquad
    \sigma_N^{\mathrm{sa}}:=g_N^\star(h_N^\star(\pi))={S}_N^\star(\sigma).
\end{equation}
In coefficient form, this means that the vector stress path $q(t)$ is sampled by equal increments of its Euclidean total variation in the $(q_1,q_2)$-plane. Since every sampled value lies in the disk $B_{s_{\mathrm Y}}$, the decoder again acts as the identity on the code. The stress-aware error is
\begin{equation} \label{eq:nonprop-stress-aware-error}
    E_N^{\mathrm{sa}}
    :=\|\sigma-\sigma_N^{\mathrm{sa}}\|_{L^\infty(0,T;F)}
    \le
    \frac{1}{2N}\,\Var(\sigma;[0,T]).
\end{equation}

\begin{figure}[htbp]
  \centering
  \safeincludegraphics[width=0.86\linewidth]{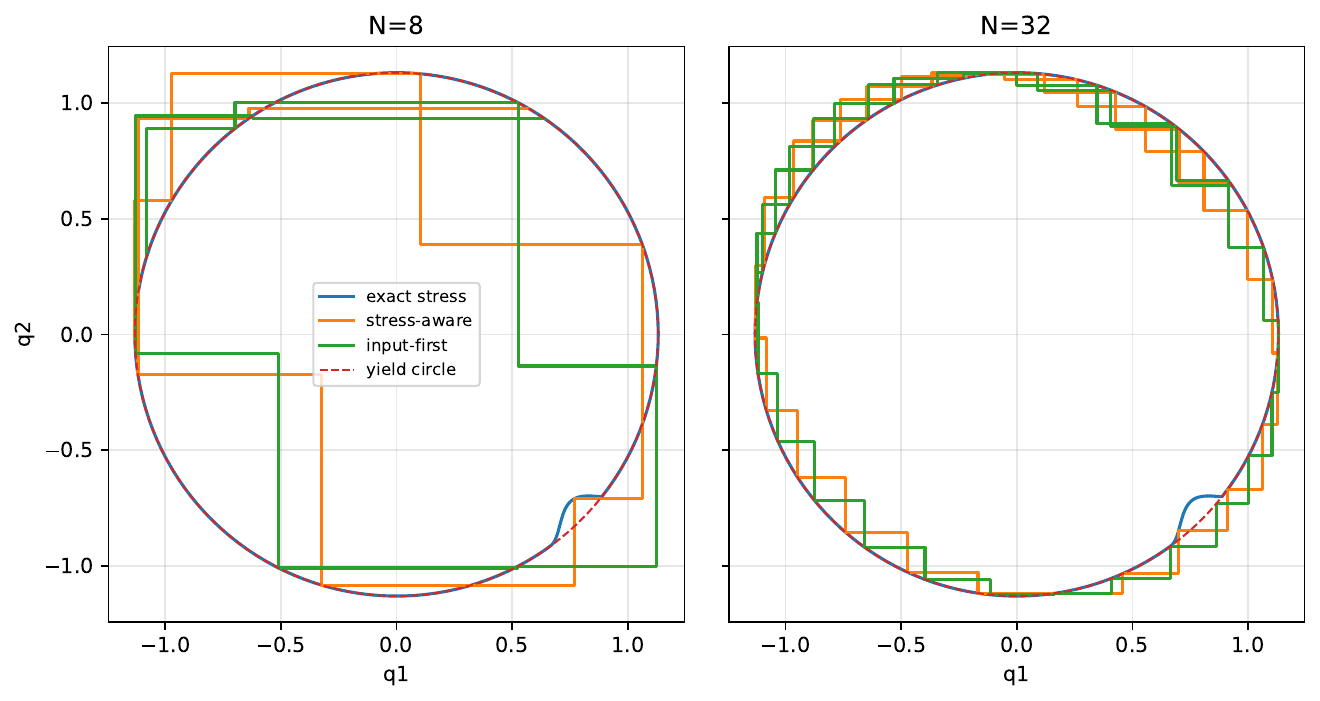}
  \caption{Nonproportional loading: exact stress path, input-first equal-variation decoded path, and stress-aware decoded path. The stress-aware samples track the variation of the stress trajectory on the yield disk rather than the variation of the input trajectory.}
  \label{fig:nonprop-stressaware-paths}
\end{figure}

\begin{figure}[htbp]
  \centering
  \safeincludegraphics[width=0.78\linewidth]{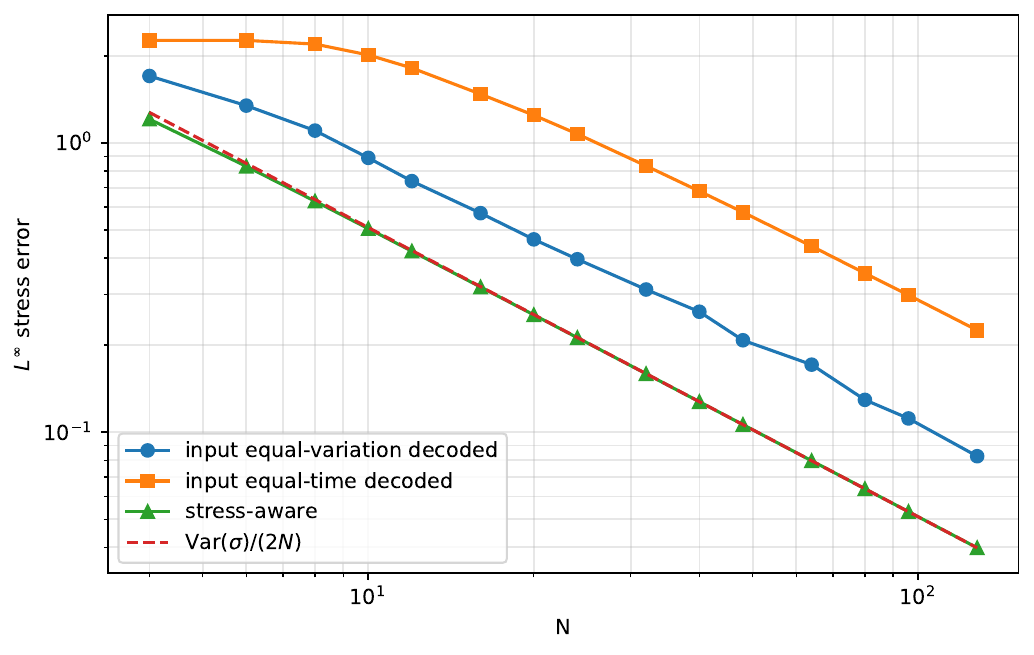}
  \caption{Nonproportional loading: comparison of input-first stress errors and stress-aware stress error. The dashed curve is the bound $\Var(\sigma;[0,T])/(2N)$.}
  \label{fig:nonprop-stressaware-errors}
\end{figure}

Figure~\ref{fig:nonprop-stressaware-paths} shows the geometric effect of stress-aware sampling in the genuinely vector setting. The input-first code follows the stress response generated by a compressed loading path; its sample locations are inherited from the variation of $p(t)$. The stress-aware code instead follows the computed stress path itself, including the portions of tangential motion on the yield circle. Figure~\ref{fig:nonprop-stressaware-errors} shows the corresponding stress errors. The stress-aware curve follows the stress-variation bound \eqref{eq:nonprop-stress-aware-error}, whereas the input-first curves are affected by the vector amplification already displayed in Figure~\ref{fig:nonprop-amplification}.

In summary, the nonproportional example demonstrates why the sharp vector constitutive theorem is formulated with stress-aware encoders. Equal-variation sampling remains the right principle, but the variable in which variation is measured matters. Sampling the input path gives a good reduced loading history; sampling the stress path gives the sharp constitutive code and avoids relying on an unavailable vector $L^\infty$ non-expansiveness property.

\clearpage
\section{Summary and concluding remarks}
\label{sec:conclusions}

This paper treats a local, material-point problem for rate-independent elastic-ideally-plastic response and its approximation by finite hereditary surrogates. The main conclusions are as follows:

\smallskip

\begin{itemize}
\item[(i)] Starting from the closest-point projection algorithm on step inputs, one obtains a hereditary constitutive operator on absolutely continuous driving histories. Equivalently, for $\pi\in W^{1,1}(0,T)$ the stress history remains in the elastic domain and satisfies a differential inclusion. In this $W^{1,1}$ setting, the constitutive map is causal, preserves the elastic domain, contracts total variation, and satisfies a $BV$-to-$L^\infty$ stability estimate.

\item[(ii)] For approximation of the \emph{input} history by right-continuous step surrogates with at most $N$ constant pieces, the minimax problem on absolutely continuous histories admits a sharp solution. The optimal input encoder is the sampled equal-variation encoder: one partitions the history by equal increments of cumulative variation and samples at the corresponding variation midpoints. The normalized worst-case input error is exactly $(2N)^{-1}$.

\item[(iii)] For stress-valued constitutive approximation, the sharp vector minimax theorem is obtained by using a constitutive, stress-aware encoder. The encoder first forms $\sigma=\mathcal P(\pi)$, then stores the equal-variation step approximation ${S}_N^\star(\sigma)$. Since step histories taking values in the elastic domain are fixed points of the discrete decoder, the decoded output is exactly this encoded stress history. Under $0\in\operatorname{int}C$, the normalized constitutive minimax error is again exactly $(2N)^{-1}$.

\item[(iv)] If one instead insists on an input-first strategy and estimates the output only through vector $BV$-to-$L^\infty$ stability, the resulting bounds are suboptimal. The sharp $L^\infty$ input estimate cannot be propagated through the vector decoder because the vector play operator is not $L^\infty$-nonexpansive. This explains why the $BV$-stability reduction is useful as an a posteriori or input-first estimate, but obsolete as a way of proving the sharp constitutive minimax bound.

\item[(v)] The scalar case remains special in a different sense. For one-dimensional loading, the complementary variable is $L^\infty$-nonexpansive with respect to the driving history, and the complementary-variable minimax problem is sharp with the purely input-based equal-variation encoder. Thus the scalar result complements, rather than replaces, the stress-aware vector theorem.
\end{itemize}

\smallskip

The principal structural message is therefore clear: for hereditary rate-independent response, the natural variable for compression is cumulative variation. For the input problem, cumulative variation is computed from the driving history. For the sharp stress-valued constitutive problem, it is computed from the stress history itself. The encoder/decoder viewpoint accommodates both choices: an input encoder compresses prescribed data before constitutive evaluation, whereas a stress-aware constitutive encoder compresses the material response and is decoded by the same hereditary law.

This observation has direct implications for Data-Driven inelasticity and related computational architectures, where the central challenge is to populate and organize history-dependent material data efficiently \cite{KirchdoerferOrtiz2016, StainierLeygueOrtizEtAl2019, EggersmannStainierOrtizReese2021}. The present analysis suggests two complementary sampling principles. Input histories should be sampled approximately uniformly in accumulated input variation when the goal is to compress prescribed strain histories. Constitutive databases, by contrast, should organize admissible stress/strain histories by accumulated response variation when the goal is to approximate the material response itself. This viewpoint is consistent with efficient phase-space sampling and search strategies in Data-Driven computational mechanics \cite{EggersmannStainierOrtizReese2021}, and it parallels analogous compression principles for linear hereditary response \cite{SalahshoorOrtiz2022}.

The hereditary formulation also suggests a natural connection with global solution strategies based on equivalent, or fictitious, body forces \cite{Mendelson1968, Mura1987}, also known as the method of initial stress or initial stiffness \cite{ZienkiewiczValliappanKing1969, Thomas1984, SloanShengAbbo2000}. Once the local constitutive update is written as a history operator, the inelastic contribution may be viewed as a history-dependent source term within an elastic Green-operator or equivalent-body-force iteration. Equal-variation compression may then be applied either to the imposed local driving histories or, after local updates, to the resulting source/stress histories. The asynchronous local stepping induced by equal-variation sampling may in turn be handled using ideas related to asynchronous variational integrators in dynamics \cite{LewMarsdenOrtizWest2003}. A rigorous realization of this program for initial-boundary-value problems lies beyond the present local theory, but the compatibility is conceptually clear.

Several directions for further work follow naturally. One is the extension from ideal plasticity to hardening, softening, and more general rate-dependent models such as elasto-viscoplasticity. For linear rate dependency, the return mapping $P_C$ is frustrated by viscosity and falls short of $C$ exponentially in time \cite{OrtizPinskyTaylor1983}; rate dependency of the power-law type \cite{OrtizStainier1999} suggests recourse to $L^p$ spaces in place of the present $BV$ framework. A second direction is the approximation of genuinely nonlocal history operators, where the encoded variable is no longer a single local path but a field of interacting histories. A third is the development of Data-Driven solvers that use equal-variation sampling as a principled rule for offline history-database construction. A fourth is a rigorous extension of the present local theory to spatially distributed initial-boundary-value problems. The sharp input theorem, the sharp stress-aware vector constitutive theorem, and the scalar complementary-variable theorem obtained here provide precise benchmarks against which such developments may be measured.

\section*{Acknowledgements}

The financial support of the \emph{Centre Internacional de M\`etodes Num\`erics a l'Enginyeria} (CIMNE) of the \emph{Universitat Polit\`ecnica de Catalunya} (UPC), Spain, through the \emph{UNESCO Chair in Numerical Methods in Engineering} is gratefully acknowledged. The work of the second author is supported by grants PID2023-151823NB-I00, and  SBPLY/23/180225/000023.

\end{document}